\newcommand{\toas}{\overset{a.s.}{\underset{n\to\infty}\longrightarrow}}
\def\LCM{\textsl{LCM}\hspace{1pt}}
\def\GCD{\textsl{GCD}\hspace{1pt}}
\def\iprob{\stackrel{\Prob}{\to}}
\def\Prob{\mathbb{P}}
\newcommand{\R}{\mathbb{R}}
\newcommand{\E}{\mathbb{E}}
\newcommand{\N}{\mathbb{N}}
\newcommand{\1}{\mathbbm{1}}
\renewcommand{\P}{\mathbb{P}}
\newtheorem{theorem}{Theorem}[section]
\newtheorem{lemma}[theorem]{Lemma}
\newtheorem{corollary}[theorem]{Corollary}
\newtheorem{proposition}[theorem]{Proposition}
\theoremstyle{remark}
\newtheorem{remark}[theorem]{Remark}
\numberwithin{equation}{section}
\begin{document}

\title[Limit theorems for the LCM of a random set of integers]{Limit theorems for the least common multiple of a random set of integers}

\author{Gerold Alsmeyer}
\address{Inst.~Math.~Stochastics, Department of Mathematics and Computer Science, University of M\"unster, Orl\'eans-Ring 10, D-48149, M\"unster, Germany}
\email{gerolda@uni-muenster.de}

\author{Zakhar Kabluchko}
\address{Inst.~Math.~Stochastics, Department of Mathematics and Computer Science, University of M\"unster, Orl\'eans-Ring 10, D-48149, M\"unster, Germany}
\email{zakhar.kabluchko@uni-muenster.de}

\author{Alexander Marynych}
\address{Faculty of Computer Science and Cybernetics, Taras Shevchenko National University of Kyiv, 01601 Kyiv, Ukraine}
\email{marynych@unicyb.kiev.ua}
\thanks{GA and ZK were partially supported by the Deutsche Forschungsgemeinschaft (SFB 878), AM was partially supported by the Alexander von Humboldt Foundation.}

\subjclass[2010]{Primary: 60F05;  secondary 11N37, 60F15. Keywords: random set of integers, least common multiple, law of large numbers, central limit theorem, functional limit theorem, Gaussian process, von Mangoldt function, Chebyshev functions}

\begin{abstract}
Let $L_{n}$ be the least common multiple of a random set of integers obtained from $\{1,\ldots,n\}$ by retaining each element with probability $\theta\in (0,1)$ independently of the others. We prove that the process $(\log L_{\lfloor nt\rfloor})_{t\in [0,1]}$, after centering and normalization, converges weakly to a certain Gaussian process that is not Brownian motion. Further results include a strong law of large numbers for $\log L_{n}$ as well as Poisson limit theorems in regimes when $\theta$ depends on $n$ in an appropriate way.
\end{abstract}

\maketitle

\section{Introduction and main results}

For $n\in\N$, let $[n]$ denote the set $\{1,2,\ldots,n\}$. Fixing a number $0<\theta<1$, remove each element in $[n]$ with probability $1-\theta$, independently of all other elements in the set. Denote by $A_{n}$ the random subset of remaining elements and by $L_{n}:=\LCM(A_{n})$ their least common multiple. In a recent article, Cilleruelo et al. \cite[Thm.~1.1]{CillRueSarka:14} proved the following weak law of large numbers: As $n\to\infty$,
\begin{equation}\label{eq:WLLN L_n}
\frac{\log L_{n}}{n}\ \iprob\ \frac{\theta\log(1/\theta)}{1-\theta},
\end{equation}
where $\iprob$ means convergence in probability. The result remains valid in the limiting case $\theta=1$ when defining the right-hand side of \eqref{eq:WLLN L_n} as $1$ as well, thus
$$ \lim_{n\to\infty} \frac{\log\LCM([n])}{n} = 1. $$
This is in fact a well-known consequence of the prime number theorem.

On the other hand, the derivation of results beyond \eqref{eq:WLLN L_n}, like a strong law of large numbers or a central limit theorem for $\log L_{n}$, seem to be open problems to our best knowledge. The purpose of this article is to not only provide limit theorems of this kind for both fixed $\theta$ and when $\theta$ varies with $n$, but also prove a functional limit theorem for the stochastic process
$$ t\ \mapsto\ L_{\lfloor nt\rfloor},\quad t\in[0,1] $$
as $n\to\infty$. This latter result will actually be presented first and then yield a central limit theorem for $\log L_{n}$ as an immediate consequence (Corollary \ref{cor:CLT L_n}).

\subsection{A functional central limit theorem}
In order to state the main result, we define the function
\begin{equation}\label{eq:g_def}
g(z)\ :=\ \sum_{k\ge 1}\frac{z^{k}(1-z^{k})}{k(k+1)}
\end{equation}
for $|z|<1$. It can be provided in closed form which is done in Remark \ref{rem:g in closed form} below.

\begin{theorem}\label{thm:main}
As $n\to\infty$, the following weak convergence holds true in the Skorokhod space $D[0,1]$ of c\`adl\`ag functions endowed with the $J_{1}$-topology:
\begin{equation}\label{eq:main_convergence}
\left(\frac{\log L_{\lfloor nt\rfloor}-\E\log L_{\lfloor nt\rfloor}}{\sqrt{n\log n}}\right)_{t\in [0,1]}\ \stackrel{J_{1}}{\Longrightarrow}\ (G(t))_{t\in[0,1]},
\end{equation}
where $(G(t))_{t\in[0,1]}$ is a centered Gaussian process with covariance function
\begin{equation}\label{eq:G_covar2}
\E [G(t)G(s)]= \sum_{k\ge 1}\left(\frac{t}{k}\wedge \frac{s}{k}\right) p_{k} - \sum_{k,l\ge 1}\left(\frac{t}{k}\wedge \frac{s}{l}\right)p_{k}p_{l}
\end{equation}
for $0\le s,t\le 1$, where $p_{k}:=\theta(1-\theta)^{k-1}$ for $k\in\N$. In particular,
\begin{equation}\label{eq:G_var}
{\rm Var}[G(t)]=g(1-\theta)t.
\end{equation}
The process $(G(t))_{t\in[0,1]}$ a.s. has continuous paths.
\end{theorem}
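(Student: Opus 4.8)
The plan is to linearise $\log L$ via the von Mangoldt function, reduce to an independent sum indexed by large primes, and then combine a Lindeberg--Feller computation for the finite-dimensional distributions with a moment criterion for tightness. The starting identity is
\begin{equation*}
\log L_{\lfloor nt\rfloor}=\sum_{d\ge 1}\Lambda(d)\,X_d(t),\qquad X_d(t):=\1\{d\mid m\ \text{for some}\ m\in A_{\lfloor nt\rfloor}\},
\end{equation*}
which holds since the exponent of a prime $p$ in $L_{\lfloor nt\rfloor}$ equals $\sum_{a\ge 1}X_{p^a}(t)$; thus the sum runs over prime powers $d=p^a$. Writing $q:=1-\theta$ and $N_d(t):=\lfloor\lfloor nt\rfloor/d\rfloor$, each $X_d(\cdot)$ is a nondecreasing $\{0,1\}$-valued step function with $\E X_d(t)=1-q^{N_d(t)}$, measurable with respect to the retention indicators of the multiples of $d$ in $[n]$. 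After centering, the left-hand side of \eqref{eq:main_convergence} becomes $Y_n(t):=(n\log n)^{-1/2}\sum_{d}\Lambda(d)(X_d(t)-\E X_d(t))$.

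The key structural fact is that distinct primes $p,p'>\sqrt n$ satisfy $pp'>n$ and therefore have no common multiple in $[n]$, so the processes $\{X_p(\cdot):\sqrt n<p\le n\}$ are mutually independent. I split $Y_n=\widehat Y_n+Z_n$, where $\widehat Y_n$ keeps only the primes $p\in(\sqrt n,n]$ and $Z_n$ collects all remaining prime powers (primes $p\le\sqrt n$ and all $p^a$ with $a\ge 2$). By Chebyshev's estimates $\vartheta(\sqrt n)=O(\sqrt n)$ and $\psi(n)-\vartheta(n)=O(\sqrt n)$, the total $\Lambda$-mass of the remainder is $O(\sqrt n)$, so the deterministic bound $\sup_{t\in[0,1]}|Z_n(t)|\le 2(n\log n)^{-1/2}\sum_{d\in\mathrm{rem}}\Lambda(d)=O((\log n)^{-1/2})\to 0$ lets me discard $Z_n$ entirely. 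It thus suffices to prove $\widehat Y_n\Rightarrow G$ in $D[0,1]$.

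For the finite-dimensional distributions I use the Cram\'er--Wold device together with the Lindeberg--Feller theorem applied to the independent summands of $\widehat Y_n$. Each summand is bounded by $\log n/\sqrt{n\log n}\to 0$, so the Lindeberg condition holds automatically and only the covariances must be identified. Monotonicity gives $X_p(s)X_p(t)=X_p(s\wedge t)$, hence $\mathrm{Cov}(X_p(s),X_p(t))=\E X_p(s\wedge t)-\E X_p(s)\,\E X_p(t)$. With $x=n/p$ one has $N_p(u)=\lfloor xu\rfloor+O(n^{-1/2})$ and $\log p=\log n-\log x$; Abel summation based on $\vartheta(y)\sim y$ then gives
\begin{equation*}
\frac{1}{n\log n}\sum_{\sqrt n<p\le n}(\log p)^2\,\mathrm{Cov}(X_p(s),X_p(t))\ \longrightarrow\ \int_1^\infty\frac{\E X(s\wedge t)-\E X(s)\,\E X(t)}{x^2}\,dx,
\end{equation*}
where $\E X(u)=\sum_{k\le xu}p_k$ with $p_k=\theta q^{k-1}$. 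Inserting this and integrating termwise via $\int_a^\infty x^{-2}\,dx=1/a$ (the relevant lower limits $k/s$, $\max(k/s,l/t)$ all exceed $1$) reproduces precisely the two sums in \eqref{eq:G_covar2}; the specialisation $s=t$, after the substitution $y=xt$, collapses to $g(1-\theta)\,t$, which is \eqref{eq:G_var}. Since the limits are Gaussian, this identifies the finite-dimensional limits of $\widehat Y_n$ with those of the centered Gaussian process $G$.

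It remains to prove tightness in the $J_1$-topology, and this is the main obstacle. I will verify Billingsley's moment criterion: for $r\le t\le s$,
\begin{equation*}
\E\big[(\widehat Y_n(t)-\widehat Y_n(r))^2\,(\widehat Y_n(s)-\widehat Y_n(t))^2\big]\le C\,(F(s)-F(r))^2
\end{equation*}
for a continuous nondecreasing $F$, for which a linear $F(t)=ct$ will suffice. Expanding the product into a four-fold sum over primes and using independence and centering, the surviving pairings are controlled as follows: the ``diagonal'' pairing reproduces $\mathrm{Var}(\widehat Y_n(t)-\widehat Y_n(r))\,\mathrm{Var}(\widehat Y_n(s)-\widehat Y_n(t))\le c^2(t-r)(s-t)\le\tfrac14(F(s)-F(r))^2$, using the uniform increment bound $\mathrm{Var}(\widehat Y_n(t)-\widehat Y_n(r))\le c(t-r)$ (a consequence of $\E[(X_p(t)-X_p(r))^2]=q^{N_p(r)}-q^{N_p(t)}$ and Chebyshev's $\vartheta(y)=O(y)$); the remaining ``cross'' pairing exploits that $X_p$ jumps once, so its increments over the disjoint intervals $(r,t]$ and $(t,s]$ satisfy $(X_p(t)-X_p(r))(X_p(s)-X_p(t))=0$, which renders the associated mixed terms nonpositive and bounds them, again by AM--GM, by $(F(s)-F(r))^2$; finally the fourth-order single-prime terms are $O((n\log n)^{-2}\sum_p(\log p)^4)=O((\log n)^2/n)\to 0$. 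The demanding part is the uniform-in-$n$ bookkeeping that subsumes all pairings under one continuous $F$. As $F$ is continuous, Billingsley's theorem gives $J_1$-tightness and forces every subsequential limit to have continuous paths; combined with the finite-dimensional convergence this yields $\widehat Y_n\Rightarrow G$ and hence \eqref{eq:main_convergence}. The asserted a.s.\ path continuity of $G$ also follows directly: the integral representation of \eqref{eq:G_covar2} gives $\E[(G(t)-G(s))^2]\le\int_1^\infty x^{-2}(q^{\lfloor xs\rfloor}-q^{\lfloor xt\rfloor})\,dx\le C|t-s|$, and Gaussianity upgrades this to $\E[(G(t)-G(s))^4]=3(\E[(G(t)-G(s))^2])^2\le 3C^2|t-s|^2$, so the Kolmogorov--Chentsov criterion applies.
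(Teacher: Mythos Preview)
Your reduction to large primes and the finite-dimensional convergence argument follow the paper's route closely; the only cosmetic differences are that you discard the small primes and higher prime powers by a deterministic bound (which is in fact simpler than the paper's Markov-inequality estimate) and that you compute the limiting covariance via an integral $\int_1^\infty x^{-2}(\cdots)\,dx$ rather than by grouping primes according to the pair $(\lfloor nt/p\rfloor,\lfloor ns/p\rfloor)$ as the paper does in its Lemma~\ref{lem:cov_asymp}. Both routes arrive at~\eqref{eq:G_covar2}.

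The tightness step, however, has a genuine gap. Your claimed uniform increment bound $\mathrm{Var}(\widehat Y_n(t)-\widehat Y_n(r))\le c(t-r)$ is false. Take $r<t$ with $\lfloor nr\rfloor=m-1$, $\lfloor nt\rfloor=m$ for an integer $m\in(n/2,n]$ having a prime factor $p>\sqrt n$; then the only large prime contributing is $p$, and one computes
\[
\mathrm{Var}\big(\widehat Y_n(t)-\widehat Y_n(r)\big)=\frac{(\log p)^2}{n\log n}\,\theta q^{m/p-1}\big(1-\theta q^{m/p-1}\big),
\]
which for $m/p$ bounded is of order $(\log n)/n$ regardless of how small $t-r$ is. The estimate $\vartheta(y)=O(y)$ that you invoke only gives $\vartheta(b)-\vartheta(a)\le Cb$, not $\le C(b-a)$; even with the prime number theorem one obtains at best $\mathrm{Var}(\widehat Y_n(t)-\widehat Y_n(r))\le c(t-r)+c'/\log n$, with an additive error independent of $t-r$. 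Carrying this through to the full criterion: for $r<t<s$ with $\lfloor nr\rfloor=m-1$, $\lfloor nt\rfloor=m$, $\lfloor ns\rfloor=m+1$ and both $m,m+1$ possessing a prime factor exceeding $\sqrt n$ (a positive-density condition), the increments $\Delta_1,\Delta_2$ are independent and $\E[\Delta_1^2\Delta_2^2]=\mathrm{Var}(\Delta_1)\mathrm{Var}(\Delta_2)$ is of order $(\log n)^2/n^2$, while $(s-r)^2$ is of order $1/n^2$. The ratio blows up like $(\log n)^2$, so Billingsley's moment criterion with $F(u)=cu$ fails, and no fixed continuous $F$ absorbs this defect. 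Your remark that the single-prime fourth-order terms are $O((\log n)^2/n)\to 0$ is likewise insufficient: the criterion requires them to be $\le C(F(s)-F(r))^2$, not merely $o(1)$.

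The paper sidesteps this obstacle by invoking Pollard's functional CLT for triangular arrays of row-wise independent processes (Theorem~10.6 in \cite{Pollard:90}). The crucial point is that Pollard's condition~(v) is \emph{asymptotic} rather than uniform: it only asks that $\rho_n(s_n,t_n)\to 0$ whenever $|s_n-t_n|\to 0$, and this follows from the estimate of Lemma~\ref{lem:cheb_sum_with_remainder}, which yields $\rho_n(r,t)^2\le c|t-r|+c'/\log n$; the error $c'/\log n$ is harmless because it vanishes in the limit. The manageability condition~(i) is handled by the monotonicity of each $t\mapsto I_{A_{\lfloor nt\rfloor}}(p)$. This asymptotic-equicontinuity framework tolerates the irregularity of the primes in a way that a uniform fourth-moment bound cannot.
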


A distributional property as well as a probabilistic representation of the limit process $(G(t))_{t\in[0,1]}$ are given in the subsequent proposition.

\begin{proposition}\label{prop:G_process}
(a) Let $\mathcal{G}_{\theta}$ be a random variable with geometric distribution on $\N$, viz.
$$ \P\{\mathcal{G}_{\theta}=k\}\ =\ p_{k}\ =\ \theta(1-\theta)^{k-1},\quad k\in\N, $$
and $B = (B(t))_{t\in [0,1]}$ an independent standard Brownian motion. Then
\begin{equation}\label{eq:G_rep}
\left(G(t)+\E [B(t/\mathcal{G}_{\theta})|B] \right)_{t\in[0,1]}\ \overset{d}{=}\ \left(B(t\,\E\mathcal{G}_{\theta}^{-1})\right)_{t\in[0,1]},
\end{equation}
where $\E[\cdot|B]$ denotes the conditional expectation and the process $(G(t))_{t\in [0,1]}$ is independent of $(B(t))_{t\in [0,1]},\mathcal{G}_{\theta}$ on the left-hand side.

\vspace{.2cm}
(b) If $B_{1},B_{2},\ldots$ denote independent standard Brownian motions, then
\begin{align}
\begin{split}\label{eq:g_series}
(G(t))_{t\in[0,1]}\ &\overset{d}{=}\ \left(\sqrt{\theta(1-\theta)}\sum_{i\ge 1} (1-\theta)^{(i-1)/2}\right.\\
&\quad\left.\times\left(B_{i}\left(\frac t i\right)- \sum_{k\ge i+1} \theta(1-\theta)^{k-i-1} B_{i}\left(\frac tk\right)\right)\right)_{t\in[0,1]}.
\end{split}
\end{align}
\end{proposition}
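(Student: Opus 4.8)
The plan rests on the observation that \emph{every} process occurring in \eqref{eq:G_rep} and \eqref{eq:g_series} is a centered Gaussian process (for the infinite series this is part of what must be checked). Since a centered Gaussian process with continuous paths is determined in law by its covariance function, it suffices, once the series in question are shown to converge to well-defined continuous processes, to match covariances; this yields equality in distribution as $C[0,1]$-valued (hence also $D[0,1]$-valued) random elements.

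\emph{Part (a).} Since $G$, $B$ and $\mathcal{G}_{\theta}$ are mutually independent, $H(t):=\E[B(t/\mathcal{G}_{\theta})\mid B]=\sum_{k\ge1}p_{k}B(t/k)$; this series converges a.s.\ uniformly on $[0,1]$ (as $\sum_k p_k<\infty$ and $B$ is bounded there), and $H$ is a continuous centered Gaussian process with $\E[H(t)H(s)]=\sum_{k,l\ge1}(t/k\wedge s/l)\,p_{k}p_{l}$. Because $G$ and $H$ are independent, the left-hand side of \eqref{eq:G_rep} is centered Gaussian with covariance $\E[G(t)G(s)]+\E[H(t)H(s)]$; by \eqref{eq:G_covar2} the two double sums cancel, leaving
$$\E[G(t)G(s)]+\E[H(t)H(s)]=\sum_{k\ge1}\Big(\tfrac tk\wedge\tfrac sk\Big)p_{k}=(t\wedge s)\sum_{k\ge1}\frac{p_k}{k}=(t\wedge s)\,\E\mathcal{G}_{\theta}^{-1},$$
with $\E\mathcal{G}_{\theta}^{-1}=\theta\log(1/\theta)/(1-\theta)<\infty$. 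This is exactly the covariance of $(B(t\,\E\mathcal{G}_{\theta}^{-1}))_{t\in[0,1]}$, so (a) follows.

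\emph{Part (b).} Write the right-hand side of \eqref{eq:g_series} as $\widetilde G(t)=\sqrt{\theta(1-\theta)}\sum_{i\ge1}(1-\theta)^{(i-1)/2}X_{i}(t)$ with $X_{i}(t)=B_{i}(t/i)-\sum_{k\ge i+1}\theta(1-\theta)^{k-i-1}B_{i}(t/k)$. One first checks that $\widetilde G$ is a well-defined continuous centered Gaussian process: the coefficients defining $X_i$ are absolutely summable with sum of moduli $2$, and all time arguments lie in $[0,1]$, so $\sup_{t\in[0,1]}|X_{i}(t)|\le2\sup_{[0,1]}|B_i|$ and hence $\sum_{i\ge1}\theta(1-\theta)^{i}\E[\sup_{t}X_i(t)^2]<\infty$, which (e.g.\ by the It\^o--Nisio theorem) yields a.s.\ uniform convergence of the series. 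By independence of the $B_i$, $\E[\widetilde G(t)\widetilde G(s)]=\theta\sum_{i\ge1}(1-\theta)^{i}\E[X_i(t)X_i(s)]$. To evaluate the inner covariances I would linearise the minima through $a\wedge b=\int_0^1\1\{u<a\}\1\{u<b\}\,du$ (valid for $a,b\in[0,1]$): writing $X_i(t)=\sum_{k\ge i}\nu_i(k)B_i(t/k)$ with $\nu_i(i)=1$ and $\nu_i(k)=-\theta(1-\theta)^{k-i-1}$ for $k\ge i+1$ (so $\sum_k\nu_i(k)=0$), a short geometric-series computation gives, for a.e.\ $u\in(0,1)$,
$$\sum_{k}\nu_i(k)\,\1\{u<t/k\}=\sum_{i\le k\le\lfloor t/u\rfloor}\nu_i(k)=(1-\theta)^{\lfloor t/u\rfloor-i}\,\1\{\lfloor t/u\rfloor\ge i\}.$$
Substituting, abbreviating $M=\lfloor t/u\rfloor$, $M'=\lfloor s/u\rfloor$, and interchanging $\sum_i$ with $\int_0^1$ (legitimate since the summands are $O((1-\theta)^i)$ uniformly in $u$), one more geometric summation gives
$$\E[\widetilde G(t)\widetilde G(s)]=\int_0^1\theta(1-\theta)^{M+M'}\sum_{i=1}^{M\wedge M'}(1-\theta)^{-i}\,du=\int_0^1\big((1-\theta)^{M\vee M'}-(1-\theta)^{M+M'}\big)\,du.$$
Applying the same integral device to the right-hand side of \eqref{eq:G_covar2} turns it into $\int_0^1\big[(1-(1-\theta)^{M\wedge M'})-(1-(1-\theta)^{M})(1-(1-\theta)^{M'})\big]\,du$, which simplifies, using $(1-\theta)^{M}+(1-\theta)^{M'}-(1-\theta)^{M\wedge M'}=(1-\theta)^{M\vee M'}$, to precisely the same integral. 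Hence $\widetilde G$ and $G$ have identical covariances, and (b) follows.

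I expect the covariance identity in (b) to be the main obstacle: evaluated directly it is a quadruple geometric sum over $i$, $k$, $l$, and the decisive manoeuvre is the linearisation of the two minima, after which the inner sums telescope to single powers of $1-\theta$ and both sides collapse to the common expression $\int_0^1((1-\theta)^{M\vee M'}-(1-\theta)^{M+M'})\,du$. The remaining points — convergence of the series in (a) and (b) and the interchanges of summation and integration — are routine in view of the geometric decay of the coefficients.
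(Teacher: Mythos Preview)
Your proof is correct. Part (a) is essentially identical to the paper's argument: both reduce to the covariance calculation via $H(t)=\sum_{k}p_{k}B(t/k)$ and note the cancellation of the double sum against the one in \eqref{eq:G_covar2}.

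For part (b) you take a genuinely different route. The paper writes the series as $\sum_{i}\sum_{k\ge i}a_{ik}B_{i}(t/k)$ with explicit coefficients $a_{ik}$, expands the covariance as $\sum_{k,l}(t/k\wedge s/l)\sum_{i=1}^{k\wedge l}a_{ik}a_{il}$, and then shows by a direct case distinction ($k=l$ versus $k\ne l$) that the inner sum equals $p_{k}\delta_{kl}-p_{k}p_{l}$, which immediately reproduces \eqref{eq:G_covar2}. Your approach instead linearises each minimum through $a\wedge b=\int_{0}^{1}\1\{u<a\}\1\{u<b\}\,du$, which turns every sum over $k$ into a partial geometric sum up to $\lfloor t/u\rfloor$; the telescoping identity $\sum_{k=i}^{M}\nu_{i}(k)=(1-\theta)^{M-i}$ then collapses both sides of the claimed covariance identity to the common integral $\int_{0}^{1}\big((1-\theta)^{M\vee M'}-(1-\theta)^{M+M'}\big)\,du$. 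The paper's method is shorter and purely algebraic, yielding the structural identity $\sum_{i\le k\wedge l}a_{ik}a_{il}=p_{k}\delta_{kl}-p_{k}p_{l}$ explicitly; your integral trick avoids the $k=l$ versus $k\ne l$ case split and makes the matching of the two covariance expressions more symmetric, at the mild cost of the Fubini justification (which, as you note, is routine given the geometric decay).
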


Note that
$$ \E\mathcal{G}_{\theta}^{-1}\ =\ \frac{\theta \log(1/\theta)}{1-\theta}. $$
Three realisations of the process $G$, simulated by using the representation \eqref{eq:g_series}, are shown in the right panel of Figure~\ref{fig:g}.

\begin{figure}[hb]
\includegraphics[width=0.48\textwidth]{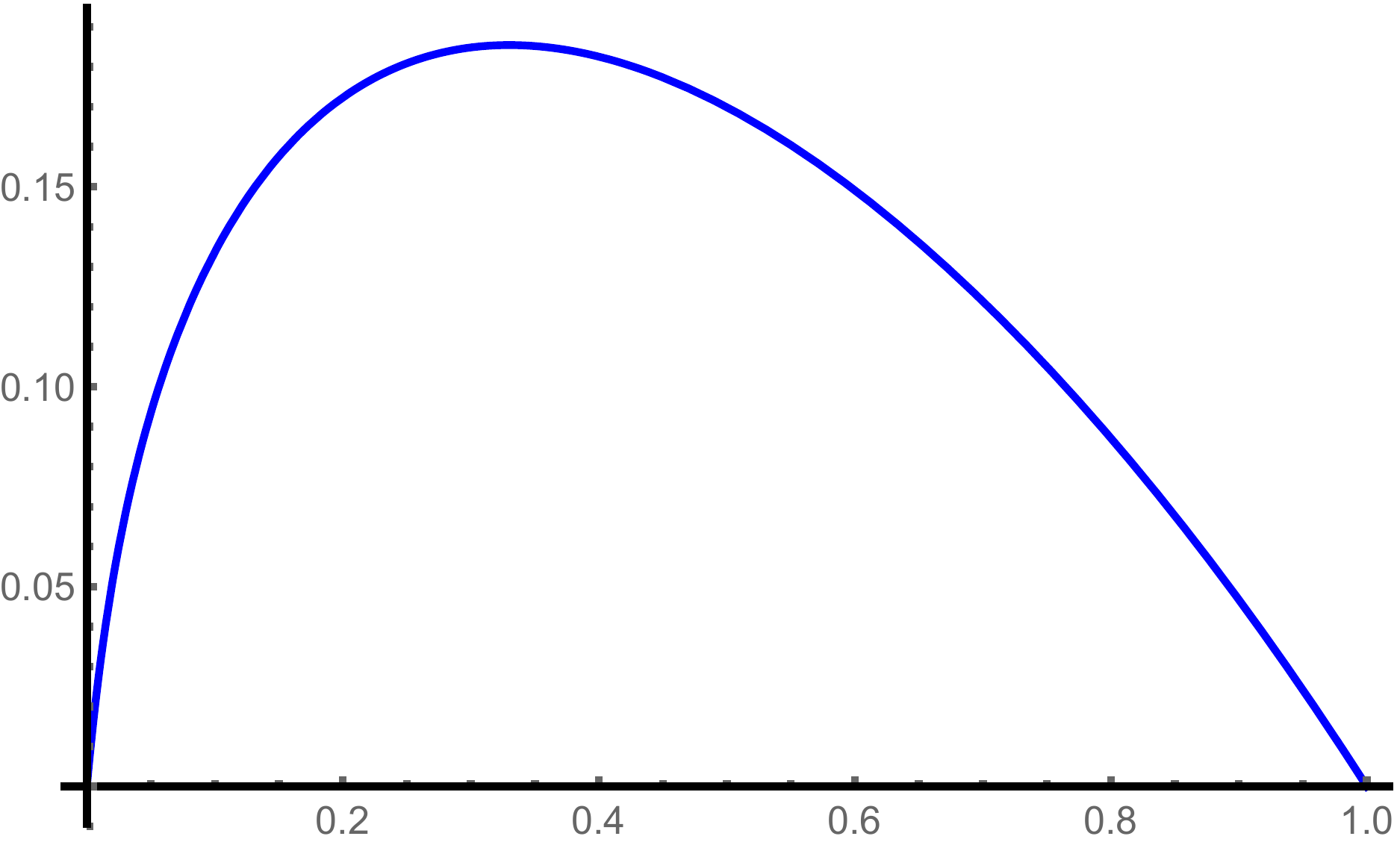}\quad
\includegraphics[width=0.48\textwidth]{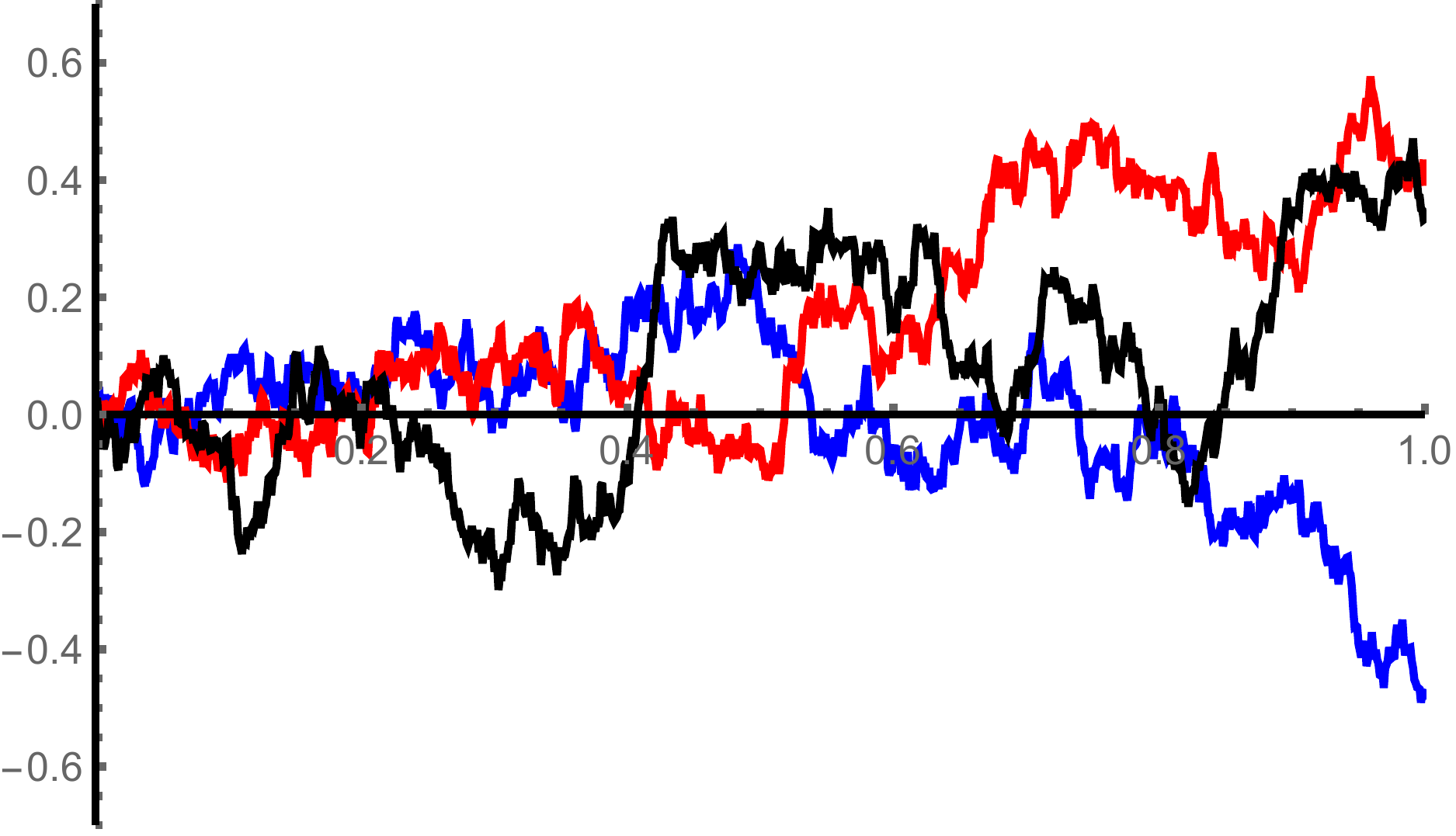}
\caption{The graph of $g(1-\theta)$, $0\le\theta\le 1$ (left) and three realisations of the limit Gaussian process $(G(t))_{t\in[0,1]}$ for $\theta=1/2$ (right).}
\label{fig:g}
\end{figure}

\begin{remark}\label{rem:g in closed form}
As already mentioned, the function $g$ in \eqref{eq:g_def} can be found explicitly,  namely
$$ g(z)\ =\ \frac {z-1}{z^{2}} \big(\log(1-z) + (1+z)\log (1+z)\big), \quad |z|<1. $$

The graph of the variance $g(1-\theta)$, thus the variance of $G(t)/t^{1/2}$ for any $0<t\le 1$, is shown in the left panel of Figure~\ref{fig:g}. Indeed, it follows from \eqref{eq:g_def} that $g(z)=h(z)-h(z^{2})$,
where
$$ h(z)\ =\ \sum_{k\ge 1}\frac{z^{k}}{k(k+1)}\ =\ \sum_{k\ge 1}\frac{z^{k}}{k}-\frac{1}{z}\sum_{k\ge 1}\frac{z^{k+1}}{k+1}\ =\ 1+\frac{1-z}{z}\log (1-z) $$
for $|z|<1$.
\end{remark}

\begin{remark}
It is known, see \cite[Prop.~2.1 and Cor.~2.1]{CillRueSarka:14}, that
\begin{align}
\begin{split}\label{eq:mean}
\E \log L_{n}\ &=\ \theta\sum_{k\ge 1}\psi\left(\frac{n}{k}\right)(1-\theta)^{k}\\
&=\ \frac{n\theta\log (1/\theta)}{1-\theta}\left(1+O\left(\exp\{-C\sqrt{\log (\theta n)}\}\right)\right),
\end{split}
\end{align}
where $\psi$ is the second Chebyshev function, see \eqref{eq:chebyshev_func2_def} below. Assuming the Riemann hypothesis the $O$-term can be substantially improved to
\begin{equation}\label{eq:mean_under_RH}
\E \log L_{n}=\frac{n\theta\log (1/\theta)}{1-\theta}\left(1+O\left(\frac{\log^{2}n}{\sqrt{n}}\right)\right),
\end{equation}
see formula (6.2) in \cite{Schoenfeld:76}.
However, even \eqref{eq:mean_under_RH} does not allow one to replace $\E \log L_{\lfloor nt\rfloor}$ in \eqref{eq:main_convergence} by $nt\theta(1-\theta)^{-1}\log (1/\theta)$.
\end{remark}

The following central limit theorem is an immediate consequence of Theorem \ref{thm:main}.

\begin{corollary}\label{cor:CLT L_n}
As $n\to\infty$,
$$ \frac{\log L_{n}-\E \log L_{n}}{\sqrt{n\log n}}\ \overset{d}{\to}\ \sqrt{g(1-\theta)}\,\mathcal{N}(0,1), $$
where $\mathcal{N}(0,1)$ is a standard normal random variable.
\end{corollary}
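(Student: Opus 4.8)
The plan is to deduce Corollary \ref{cor:CLT L_n} directly from Theorem \ref{thm:main} by evaluating the functional convergence at the fixed time $t=1$. Since weak convergence in $D[0,1]$ with respect to the $J_1$-topology implies convergence of the finite-dimensional distributions at all continuity points of the limit process (and the limit process $G$ has a.s. continuous paths, so every $t\in[0,1]$ is a continuity point a.s.), the one-dimensional projection map $x\mapsto x(1)$ is continuous $\P_G$-a.s. on $D[0,1]$ in the $J_1$-topology. Hence the continuous mapping theorem yields
\[
\frac{\log L_{n}-\E\log L_{n}}{\sqrt{n\log n}}\ \overset{d}{\to}\ G(1).
\]

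It then remains to identify the law of $G(1)$. Since $(G(t))_{t\in[0,1]}$ is a centered Gaussian process, $G(1)$ is a centered Gaussian random variable, so it is determined by its variance alone. By formula \eqref{eq:G_var} in Theorem \ref{thm:main}, ${\rm Var}[G(1)] = g(1-\theta)\cdot 1 = g(1-\theta)$. Therefore $G(1) \overset{d}{=} \sqrt{g(1-\theta)}\,\mathcal{N}(0,1)$, which is exactly the claimed limit.

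There is essentially no obstacle here: the only mild subtlety is the justification that evaluation at $t=1$ is a.s.\ continuous, which could alternatively be handled by noting that convergence in $D[0,1]$ under $J_1$ to a limit with continuous paths is in fact equivalent to convergence in the uniform topology, under which $x\mapsto x(1)$ is trivially continuous. One should also remark that $g(1-\theta)>0$ for $\theta\in(0,1)$, so the limit is a genuinely non-degenerate Gaussian law (this is visible from the closed form in Remark \ref{rem:g in closed form}, or from the series \eqref{eq:g_def} whose terms are strictly positive for $z\in(0,1)$). This completes the proof.
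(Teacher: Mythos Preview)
Your proof is correct and matches the paper's own approach: the paper simply states that the corollary is an immediate consequence of Theorem~\ref{thm:main}, which is precisely the argument you give---evaluate the functional convergence at $t=1$ and use the continuity of the limit to identify $G(1)$ as $\sqrt{g(1-\theta)}\,\mathcal{N}(0,1)$.
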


Expansion \eqref{eq:mean} for the mean of $\log L_{n}$ in combination with an estimate of its variance provided in \cite{CillRueSarka:14} will also allow us to prove the following strong version of \eqref{eq:WLLN L_n}.

\begin{theorem}\label{thm:SLLN L_n}
As $n\to\infty$,
\begin{equation}\label{eq:SLLN L_n}
\frac{\log L_{n}}{n}\ \toas\ \frac{\theta\log(1/\theta)}{1-\theta}.
\end{equation}
\end{theorem}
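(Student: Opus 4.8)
The plan is to upgrade the weak law of large numbers \eqref{eq:WLLN L_n} to an almost sure statement by combining a standard subsequence argument with monotonicity of $n\mapsto\log L_{n}$. First I would establish almost sure convergence along the subsequence $n_{j}:=\lfloor j^{2}\rfloor=j^{2}$ (or more generally $n_{j}$ with $n_{j+1}/n_{j}\to1$ and $\sum_{j}1/n_{j}<\infty$). For this I use the variance estimate for $\log L_{n}$ quoted from \cite{CillRueSarka:14}: since the excerpt states that expansion \eqref{eq:mean} for the mean together with ``an estimate of its variance provided in \cite{CillRueSarka:14}'' is what drives the proof, I may assume a bound of the form $\operatorname{Var}[\log L_{n}]=O(n\log n)$ (which is in any case implied by Theorem \ref{thm:main}). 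Writing $m_{n}:=\E\log L_{n}$, Chebyshev's inequality gives, for any $\varepsilon>0$,
\[
\P\left\{\left|\frac{\log L_{n_{j}}-m_{n_{j}}}{n_{j}}\right|>\varepsilon\right\}\ \le\ \frac{\operatorname{Var}[\log L_{n_{j}}]}{\varepsilon^{2}n_{j}^{2}}\ =\ O\!\left(\frac{\log n_{j}}{n_{j}}\right)\ =\ O\!\left(\frac{\log j}{j^{2}}\right),
\]
which is summable in $j$. Hence, by the Borel--Cantelli lemma, $(\log L_{n_{j}}-m_{n_{j}})/n_{j}\to0$ almost surely; combined with $m_{n}/n\to\theta\log(1/\theta)/(1-\theta)$, which is exactly the first-order content of \eqref{eq:mean}, this yields $\log L_{n_{j}}/n_{j}\to\theta\log(1/\theta)/(1-\theta)$ a.s.

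Next I would fill the gaps between consecutive subsequence indices using the monotonicity $L_{m}\mid L_{n}$, hence $\log L_{m}\le\log L_{n}$, whenever $A_{m}\subseteq A_{n}$ — but since $A_{n}$ is obtained by independent retention, one has the cleaner deterministic fact that $\log L_{n}$ is nondecreasing in $n$ because $A_{m}\subseteq A_{n}$ for $m\le n$ on the same probability space (the retention decisions for $\{1,\dots,m\}$ are shared). Thus for $n_{j}\le n\le n_{j+1}$,
\[
\frac{n_{j}}{n}\cdot\frac{\log L_{n_{j}}}{n_{j}}\ \le\ \frac{\log L_{n}}{n}\ \le\ \frac{n_{j+1}}{n}\cdot\frac{\log L_{n_{j+1}}}{n_{j+1}},
\]
and since $n_{j+1}/n_{j}\to1$ along the quadratic subsequence, both bounds converge a.s. to the same constant $\theta\log(1/\theta)/(1-\theta)$. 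This sandwich delivers \eqref{eq:SLLN L_n} for the full sequence.

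The main obstacle, and the only place where genuine input is needed beyond soft arguments, is securing a good enough variance bound so that the Borel--Cantelli series converges along a subsequence that is still sparse enough to make the monotonicity interpolation work (i.e.\ with $n_{j+1}/n_{j}\to1$). If the cited variance estimate from \cite{CillRueSarka:14} only gives $\operatorname{Var}[\log L_{n}]=O(n\log n)$, then the quadratic subsequence $n_{j}=j^{2}$ is comfortably sufficient, since $\sum_{j}j^{-2}\log j<\infty$ and $j^{2}/(j+1)^{2}\to1$; a weaker bound like $O(n^{1+\delta})$ for small $\delta$ would still work with a suitably chosen polynomial subsequence. One should also double-check that the error term in \eqref{eq:mean} is $o(n)$ uniformly, which it plainly is since $\exp\{-C\sqrt{\log(\theta n)}\}\to0$; this guarantees $m_{n}/n$ converges to the claimed constant without needing the Riemann hypothesis. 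The remaining details — choice of $\varepsilon$ along a countable sequence tending to $0$, the standard ``a.s.\ for all rational $\varepsilon$ simultaneously'' bookkeeping — are routine.
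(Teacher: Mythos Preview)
Your approach is essentially identical to the paper's: a variance bound plus Chebyshev and Borel--Cantelli along a sparse subsequence, followed by monotonicity interpolation; the paper uses $n_{k}=\lfloor k\log^{4}k\rfloor$ together with the estimate $\operatorname{Var}[\log L_{n}]=O(n\log^{2}n)$ from \cite{CillRueSarka:14}, while your quadratic subsequence $n_{j}=j^{2}$ works just as well with that bound. One small correction: Theorem~\ref{thm:main} gives only distributional convergence and does not by itself imply a variance bound, so you should cite the $O(n\log^{2}n)$ estimate from \cite{CillRueSarka:14} directly rather than inferring $O(n\log n)$ from the functional CLT.
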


\subsection{Poisson limit theorems}
Two further theorems deal with the case when $\theta$ varies with $n$. In the first one it tends to zero at an appropriate speed, namely $\theta=\theta(n)\simeq \frac{\lambda}{n}$ as $n\to\infty$ for some $\lambda>0$. Since the number of points retained in $A_{n}$ is binomial with parameters $n$ and $\theta$ and thus, for large $n$, approximately Poissonian with mean $\lambda$ in the regime just defined, it should not surprise that the limit in the subsequent result is also Poisson. Let $\Pi(\lambda)$ denote a Poisson random variable with mean $\lambda$.

\begin{theorem}\label{thm:main2}
Suppose that, as $n\to\infty$, $\theta=\theta(n)\simeq\frac{\lambda}{n}$ for some $\lambda>0$. Then
$$ \frac{\log L_{n}}{\log n}\ \overset{d}{\to}\ \Pi(\lambda) $$
as $n\to\infty$.
\end{theorem}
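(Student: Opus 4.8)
The plan is to reduce the statement to the elementary observation that the number $N_{n}:=|A_{n}|$ of retained integers converges in distribution to $\Pi(\lambda)$. Indeed, $N_{n}$ has a binomial distribution with parameters $n$ and $\theta(n)$, and $n\theta(n)\to\lambda$, so the classical Poisson approximation gives $N_{n}\overset{d}{\to}\Pi(\lambda)$. By Slutsky's theorem it then suffices to show that
\[
\frac{\log L_{n}}{\log n}-N_{n}\ \iprob\ 0.
\]

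To this end I would first establish the two-sided bound $N_{n}\log n\ge \log L_{n}\ge N_{n}\log n-R_{n}-S_{n}$, where $R_{n}$ and $S_{n}$ are the nonnegative random variables introduced below. The upper bound is immediate because $L_{n}$ divides $\prod_{a\in A_{n}}a$, whence $\log L_{n}\le\sum_{a\in A_{n}}\log a\le N_{n}\log n$. For the lower bound I would use the elementary inequality
\[
\LCM(a_{1},\ldots,a_{m})\ \ge\ \frac{a_{1}\cdots a_{m}}{\prod_{1\le i<j\le m}\GCD(a_{i},a_{j})},
\]
valid for any positive integers $a_{1},\dots,a_{m}$ and verified prime by prime by comparing $p$-adic valuations. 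Taking logarithms and writing $A_{n}=\{a_{1},\dots,a_{N_{n}}\}$, this yields
\[
\log L_{n}\ \ge\ N_{n}\log n\ -\ \underbrace{\sum_{a\in A_{n}}\log\frac{n}{a}}_{=:R_{n}}\ -\ \underbrace{\sum_{\{a,b\}\subseteq A_{n}}\log\GCD(a,b)}_{=:S_{n}}.
\]
Dividing the two-sided bound by $\log n$, it remains only to prove that $R_{n}$ and $S_{n}$ are bounded in probability.

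Both bounds follow from a first-moment computation together with Markov's inequality. For $R_{n}$, one has $\E R_{n}=\theta(n)\bigl(n\log n-\log n!\bigr)$, and Stirling's formula gives $n\log n-\log n!=n+O(\log n)$, so $\E R_{n}=n\theta(n)+O(\theta(n)\log n)\to\lambda$. For $S_{n}$, expanding $\log\GCD(a,b)=\sum_{d\mid\GCD(a,b)}\Lambda(d)$ with $\Lambda$ the von Mangoldt function,
\[
\E S_{n}\ =\ \theta(n)^{2}\sum_{1\le a<b\le n}\log\GCD(a,b)\ \le\ \theta(n)^{2}\sum_{d\ge 1}\Lambda(d)\Big\lfloor\frac{n}{d}\Big\rfloor^{2}\ \le\ \bigl(n\theta(n)\bigr)^{2}\sum_{d\ge 1}\frac{\Lambda(d)}{d^{2}},
\]
and the last series converges, so $\E S_{n}=O(1)$. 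Hence $R_{n}=o_{\P}(\log n)$ and $S_{n}=o_{\P}(\log n)$, and feeding this back into the two-sided bound and invoking Slutsky's theorem completes the argument.

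The heart of the matter — and the step I would expect to be the only genuine obstacle — is the lower bound on $\log L_{n}$: one must show that in the sparse regime $\theta(n)\simeq\lambda/n$ the ``overlap'' between the (few) retained integers, encoded in the correction $S_{n}$, is negligible on the scale $\log n$. The structural inputs making this work are the elementary $\LCM$/$\GCD$ inequality and the fact that the expected number of retained pairs is of order $(n\theta(n))^{2}=O(1)$; the two remaining ingredients, the Poisson limit for the binomial count and Slutsky's theorem, are entirely routine.
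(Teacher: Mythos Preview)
Your proof is correct, and its skeleton is the same as the paper's: both argue that $N_{n}=|A_{n}|\overset{d}{\to}\Pi(\lambda)$ by the binomial--Poisson limit, both use the two-sided bound coming from $\LCM(a_{1},\dots,a_{m})\ge \prod a_{i}/\prod_{i<j}\GCD(a_{i},a_{j})$, and both show that the ``product'' part contributes $N_{n}$ and the $\GCD$ correction is $o_{\P}(\log n)$.

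The execution differs. The paper conditions on $N_{n}=k$, replaces the conditional sample by $k$ i.i.d.\ uniforms on $[n]$ (observing that the ``no ties'' event has probability $\to 1$), and then handles the two pieces separately: $\sum_{i}\log U_{i}^{(n)}/\log n\to k$ in probability, and for the $\GCD$ part the paper actually proves the stronger statement that $\log\GCD(U_{1}^{(n)},U_{2}^{(n)})$ converges in distribution to a proper random variable (via the geometric limits for the prime exponents $\lambda_{p}(U^{(n)})$). You instead work unconditionally and kill both error terms $R_{n},S_{n}$ by a single first-moment bound, using $\E R_{n}=\theta(n)(n\log n-\log n!)\to\lambda$ and $\E S_{n}\le (n\theta(n))^{2}\sum_{d}\Lambda(d)/d^{2}=O(1)$. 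Your route is shorter and avoids the conditioning / coupling step entirely; what it gives up is the side information about the limiting law of $\log\GCD(U_{1}^{(n)},U_{2}^{(n)})$, which is not needed for the theorem.
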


Another, in a sense antipodal regime in which the Poisson distribution appears is when $\theta=\theta(n)\to 1$ at an appropriate speed.

\begin{theorem}\label{thm:main3}
Suppose that, as $n\to\infty$, $\theta=\theta(n) = 1- \frac{\lambda+ o(1)}{n}\log n$ for some $\lambda>0$. Then
$$ \frac 1 {\log n} \left(\sum_{k=1}^n \Lambda(k) - \log L_{n}\right)\ \overset{d}{\to}\  \Pi(\lambda/2) $$
as $n\to\infty$, where $\Lambda$ denotes the von Mangoldt function defined by formula \eqref{eq:von_mangoldt_def} below.
\end{theorem}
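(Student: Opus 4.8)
The plan is to reduce the left-hand side, via an exact arithmetic identity, to a sum of independent Bernoulli indicators ranging over the primes in $(n/2,n]$, and then apply the classical Poisson approximation to the binomial distribution. First I would use the well-known identity $\sum_{k=1}^n\Lambda(k)=\psi(n)=\log\LCM([n])$. Since $A_n\subseteq[n]$ we have $L_n\mid\LCM([n])$, and comparing $p$-adic valuations gives, for every prime $p$,
\[
v_p(\LCM([n]))-v_p(L_n)=\lfloor\log_p n\rfloor-\max_{a\in A_n}v_p(a)=\sum_{i=1}^{\lfloor\log_p n\rfloor}\1\{A_n\ \text{contains no multiple of}\ p^i\}.
\]
Multiplying by $\log p$ and summing over $p$ yields the exact formula
\[
\sum_{k=1}^n\Lambda(k)-\log L_n=\sum_{\substack{p\ \text{prime},\ i\ge 1\\ p^i\le n}}(\log p)\,\1\{A_n\cap p^i\N=\emptyset\},
\]
in which, by independence of the events $\{m\in A_n\}$, the $(p,i)$-th indicator has probability $(1-\theta)^{\lfloor n/p^i\rfloor}$, the exponent being the number of multiples of $p^i$ in $[n]$.

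Next I would show that only the primes $p\in(n/2,n]$ contribute in the limit. Write $q_n:=1-\theta=(\lambda+o(1))n^{-1}\log n$ and split the sum above according to whether $\lfloor n/p^i\rfloor=1$ (equivalently $n/2<p^i\le n$) or $\lfloor n/p^i\rfloor\ge 2$ (equivalently $p^i\le n/2$), and within the former according to $i=1$ or $i\ge 2$. For $p^i\le n/2$ the indicator has probability at most $q_n^2$, so the expected contribution of this part is $O(q_n^2\,\psi(n))=O(n^{-1}\log^2 n)$ by the Chebyshev bound $\psi(n)=O(n)$; for the prime powers $p^i\le n$ with $i\ge 2$, of which there are at most $\sum_{i\ge 2}n^{1/i}=O(\sqrt n)$, the total expected contribution is $O(\sqrt n\,(\log n)\,q_n)=O(n^{-1/2}\log^2 n)$. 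Hence
\[
\sum_{k=1}^n\Lambda(k)-\log L_n=\sum_{n/2<p\le n}(\log p)\,\1\{p\notin A_n\}+R_n,\qquad \E|R_n|\to 0,
\]
and so $R_n/\log n\to 0$ in probability.

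For the main term, write $\log p=\log n-\log(n/p)$ with $0\le\log(n/p)<\log 2$ for $p\in(n/2,n]$, so that
\[
\frac1{\log n}\sum_{n/2<p\le n}(\log p)\,\1\{p\notin A_n\}=N_n-\frac1{\log n}\sum_{n/2<p\le n}\log(n/p)\,\1\{p\notin A_n\},
\]
where $N_n:=\#\{p\ \text{prime}:n/2<p\le n,\ p\notin A_n\}$ and the subtracted term lies in $[0,(\log 2/\log n)N_n]$. Now $N_n$ is a sum of $\pi(n)-\pi(n/2)$ independent $\mathrm{Bernoulli}(q_n)$ random variables; by the prime number theorem $\pi(n)-\pi(n/2)\sim n/(2\log n)$, hence $\E N_n=(\pi(n)-\pi(n/2))\,q_n\to\lambda/2$ while the success probability $q_n\to 0$. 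The classical Poisson limit theorem for triangular arrays of independent Bernoulli variables then gives $N_n\overset{d}{\to}\Pi(\lambda/2)$; in particular $(N_n)$ is tight, so the subtracted term is $o_P(1)$. Combining these displays with Slutsky's theorem yields $\frac1{\log n}\big(\sum_{k=1}^n\Lambda(k)-\log L_n\big)\overset{d}{\to}\Pi(\lambda/2)$.

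I expect no serious obstacle once the exact formula of the first paragraph is in place: the remaining argument is elementary, the only delicate points being the truncation estimates of the second step (elementary counting of prime powers together with the Chebyshev bound $\psi(n)=O(n)$) and the verification that the discrepancy between $\log p$ and $\log n$ on $(n/2,n]$ washes out after division by $\log n$; the only input genuinely at the level of the prime number theorem is the asymptotic equivalence $\pi(n)-\pi(n/2)\sim n/(2\log n)$.
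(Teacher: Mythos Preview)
Your proof is correct and follows essentially the same approach as the paper: both derive the identity $\sum_{k\le n}\Lambda(k)-\log L_n=\sum_{m\le n}\Lambda(m)\,\1\{A_n\cap m\N=\emptyset\}$, discard the contribution of $m\le n/2$ using that the corresponding indicator has probability at most $(1-\theta)^2$, discard higher prime powers in $(n/2,n]$ by a crude count, and reduce to the Poisson limit for $N_n=\sum_{p\in\mathcal P\cap(n/2,n]}\1\{p\notin A_n\}$ via the prime number theorem $\pi(n)-\pi(n/2)\sim n/(2\log n)$. The only cosmetic differences are that the paper bounds $\P\{R_n>0\}$ rather than $\E R_n$ for the $m\le n/2$ part and uses a two-sided squeeze $\log(n/2)\le\log p\le\log n$ instead of your decomposition $\log p=\log n-\log(n/p)$; these are equivalent.
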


\section{Preliminaries}
In what follows, we let $\mathcal{P}$ denote the set of prime numbers and $m\N$ the set $\{m,2m,3m,\ldots\}$ of integral multiples of $m\in\N$. Recall that the von Mangoldt function $\Lambda:\N\mapsto \R$ is defined as
\begin{equation}\label{eq:von_mangoldt_def}
\Lambda(n)=
\begin{cases}
\log p, & \text{if } n=p^{k}\text{ for some } k\in\N\text{ and }p\in\mathcal{P},\\
\hfill0, & \text{otherwise}.
\end{cases}
\end{equation}
We will also use the two Chebyshev functions $\vartheta$ and $\psi$. The first Chebyshev function $\vartheta:\R\mapsto\R$ is defined as
\begin{equation}\label{eq:chebyshev_func1_def}
\vartheta(x)\ =\ \sum_{p\in\mathcal{P}:\,p\le x} \log p,
\end{equation}
and the second Chebyshev function $\psi:\R\mapsto \R$ as
\begin{equation}\label{eq:chebyshev_func2_def}
\psi(x)\ =\ \sum_{k\le x}\Lambda(k).
\end{equation}
Recalling the identity
$$ \log\LCM([n])\ =\ \psi(n), $$
we state the following result taken from \cite{CillRueSarka:14}, see Lemma 2.1 therein.

\begin{lemma}\label{lem:lcm_via_mangoldt}
Let $A$ be an arbitrary set of positive integers and $\LCM(A)$ the least common multiple of the elements of $A$. Then
$$ \log \LCM(A)\ =\ \sum_{m}\Lambda(m)I_{A}(m), $$
where
$$ I_{A}(m)\ :=\
\begin{cases}
1, & \text{if } A\cap m\N\ne\varnothing,\\
0, & \text{otherwise}.
\end{cases}
$$
\end{lemma}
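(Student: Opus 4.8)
The statement to prove is Lemma \ref{lem:lcm_via_mangoldt}: for an arbitrary set $A$ of positive integers,
$$ \log \LCM(A)\ =\ \sum_{m}\Lambda(m)I_{A}(m). $$

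The plan is to reduce everything to the prime factorization of $\LCM(A)$. First I would recall that for a set $A$ of positive integers (let us assume $A$ is finite, or that $\LCM(A)$ is well-defined as a positive integer; the general case follows by monotone limits since both sides are monotone in $A$), the exponent of a prime $p$ in $\LCM(A)$ is $\max_{a\in A} v_p(a)$, where $v_p$ denotes the $p$-adic valuation. Hence
$$ \log\LCM(A)\ =\ \sum_{p\in\mathcal{P}} \Big(\max_{a\in A} v_p(a)\Big)\log p. $$
The task is then to show that the right-hand side of the lemma equals this expression.

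Next I would regroup the sum $\sum_{m}\Lambda(m)I_A(m)$ by primes. Since $\Lambda(m)=0$ unless $m=p^k$ for some prime $p$ and some $k\ge 1$, we have
$$ \sum_{m}\Lambda(m)I_A(m)\ =\ \sum_{p\in\mathcal{P}}\sum_{k\ge 1}\Lambda(p^k)\,I_A(p^k)\ =\ \sum_{p\in\mathcal{P}}(\log p)\sum_{k\ge 1} I_A(p^k). $$
Now the key combinatorial observation: $I_A(p^k)=1$ iff $A\cap p^k\N\ne\varnothing$, i.e. iff there exists $a\in A$ with $p^k\mid a$, i.e. iff $\max_{a\in A} v_p(a)\ge k$. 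Therefore, for each fixed prime $p$, the sum $\sum_{k\ge 1} I_A(p^k)$ counts exactly the integers $k\ge 1$ with $k\le \max_{a\in A} v_p(a)$, which equals $\max_{a\in A} v_p(a)$. Substituting this back gives
$$ \sum_{m}\Lambda(m)I_A(m)\ =\ \sum_{p\in\mathcal{P}}(\log p)\max_{a\in A} v_p(a)\ =\ \log\LCM(A), $$
as desired.

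There is no serious obstacle here; the only point requiring a word of care is the interchange of summation order and the handling of infinite $A$. If $A$ is infinite, $\LCM(A)$ may be infinite, in which case both sides are $+\infty$ (all terms are nonnegative, so there is no issue with rearrangement); if $\LCM(A)<\infty$ then only finitely many $(p,k)$ contribute and the manipulations above are finite sums. In the application $A=A_n\subseteq[n]$ is finite, so this subtlety is irrelevant there. I would therefore present the argument for finite $A$ and remark that the general case is immediate by nonnegativity.
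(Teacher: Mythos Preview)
Your proof is correct and follows essentially the same route as the paper: both regroup the sum over $m$ as a double sum over primes $p$ and exponents $k\ge 1$, and both hinge on the observation that $I_A(p^k)=1$ iff $p^k\mid\LCM(A)$ (equivalently, iff $\max_{a\in A}v_p(a)\ge k$). Your additional remark on infinite $A$ is a nice touch but not needed for the paper's applications.
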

\begin{proof}
Since
$$ \log n\ =\ \sum_{p\in\mathcal{P}}\log p \sum_{k\in\N}\1_{\{p^{k} | n\}}\ =\ \sum_{\overset{p\in\mathcal{P},k\in\N}{p^{k} | n}}\log p, $$
we further have
\begin{gather*}
\begin{split}
\log \LCM(A)\ &=\ \sum_{\overset{p\in\mathcal{P},k\in\N}{p^{k} |  \LCM(A)}}\log p\\
&=\ \sum_{p\in\mathcal{P},k\in\N}\log p\,I_{A}(p^{k})\ =\ \sum_{m}\Lambda(m) I_{A}(m),
\end{split}
\shortintertext{where}
p^{k}|\LCM(A)\quad\Longleftrightarrow\quad A\cap\{p^{k},2p^{k},3p^{k},\ldots,\}\ =\ A\cap p^{k}\N\ \ne\ \varnothing
\end{gather*}
should be observed for the second equality.
\end{proof}

\section{Proof of Theorem \ref{thm:main}}

The proof is divided into four steps. The first two steps provide that $\log L_{n}$ is well approximated by a sum of independent random variables. The third step will be to check that finite-dimensional distributions of the approximating sum converge to finite-dimensional distributions of the Gaussian process $(G(t))_{t\in[0,1]}$. In the fourth step, this will be improved to give the asserted functional limit theorem.

\vspace{.2cm}
{\sc Step 1}. By Lemma \ref{lem:lcm_via_mangoldt},
\begin{align*}
\log L_{\lfloor nt\rfloor}\ &=\ \log\LCM(A_{\lfloor nt\rfloor})\ =\ \sum_{m}\Lambda(m)I_{A_{\lfloor nt\rfloor}}(m)\\
&=\ \sum_{p\in\mathcal{P}}\log p\sum_{k\ge 1} I_{A_{\lfloor nt\rfloor}}(p^{k})\\
&=\ \sum_{p\in\mathcal{P}}\log p\, I_{A_{\lfloor nt\rfloor}}(p)\,+\,\sum_{p\in\mathcal{P}}\log p\sum_{k\ge 2} I_{A_{\lfloor nt\rfloor}}(p^{k})\\
&=:\ S_{1}(\lfloor nt\rfloor)\,+\,S_{2}(\lfloor nt\rfloor).
\end{align*}
We will show first that, as $n\to\infty$,
$$ \frac{\sup_{t\in[0,1]}S_{2}(\lfloor nt\rfloor)}{\sqrt{n\log n}}\ \overset{\P}{\to}\ 0 $$
which, by monotonicity of $t\mapsto S_{2}(\lfloor nt\rfloor)$, is equivalent to
$$ \frac{\sum_{p\in\mathcal{P}}\log p\sum_{k\ge 2}I_{A_{n}}(p^{k})}{\sqrt{n\log n}}\ \overset{\P}{\to}\ 0. $$
By Markov's inequality, it suffices to verify
\begin{equation}\label{eq:s2_{n}eg}
\frac{\sum_{p\in\mathcal{P}}\log p\sum_{k\ge 2} \E I_{A_{n}}(p^{k})}{\sqrt{n\log n}}\ \to\ 0
\end{equation}
as $n\to\infty$. To this end, use Boole's inequality to obtain
$$ \E I_{A_{n}}(p^{k})\,=\,\P\{A_{n}\cap p^{k}\N\ne\varnothing\}\,\le\,\left(\sum_{m\le n/p^{k}}\P\{mp^{k}\in A_{n}\}\right)\wedge 1\,\le\,\frac{n\theta}{p^{k}}\wedge 1.
$$
Fix $k\ge 2$ and write
\begin{align*}
\sum_{p\in\mathcal{P}}\log p\,\E I_{A_{n}}(p^{k})\ &=\ \sum_{p\in\mathcal{P}:p^{k}\le n}\log p\,\E I_{A_{n}}(p^{k})\\
&\le\ \sum_{p\in\mathcal{P}:p^{k}\le n}\log p \left(\frac{n\theta}{p^{k}}\wedge 1\right)\\
&=n\theta\sum_{p\in\mathcal{P}:(n\theta)^{1/k}<p\le n^{1/k}}\frac{\log p}{p^{k}}\ +\ \sum_{p\in\mathcal{P}:p\le (n\theta)^{1/k}}\log p\\
&\le n\theta\sum_{p\in\mathcal{P}:p>(n\theta)^{1/k}} \frac{\log p}{p^{k}}\ +\ \sum_{p\in\mathcal{P}:p\le (n\theta)^{1/k}}\log p.
\end{align*}
For the first term in the previous line, Lemma \ref{lem:tail_est} in the Appendix provides the upper bound $Cn^{1/k}$ for all $n\ge 1$ and some $C>0$. For the second sum we use the bound $$ \sum_{p\in\mathcal{P}:p\le (n\theta)^{1/k}}\log p\ \le\ Cn^{1/k}  $$
for all $n$ and some $C$ which follows from $\sum_{p\in\mathcal{P}:p\le x}\log p\simeq x$ as $x\to\infty$, an equivalent form of the prime number theorem. In both esti\-mates, the constant $C$ does not depend on $k$. Summarizing, we arrive at the inequality
\begin{equation}\label{eq:proof_{i}nterm1}
\sum_{p\in\mathcal{P}}\log p\,\E I_{A_{n}}(p^{k})\ \le\ C n^{1/k}
\end{equation}
for all $n\ge 1$ and some positive constant $C$. Returning to \eqref{eq:s2_{n}eg} and noting that summands in the numerator  are nonzero only for $k\le \log_{2} n$, \eqref{eq:proof_{i}nterm1} implies
\begin{align*}
\sum_{p\in\mathcal{P}}\log p\sum_{k\ge 2} \E I_{A_{n}}(p^{k})\ &\le\ C\sum_{k=2}^{\lceil \log_{2} n\rceil }n^{1/k}\\
&\le\ C(\sqrt{n}+n^{1/3}\log_{2} n )=o(\sqrt{n\log n}),
\end{align*}
as $n\to\infty$, and this proves \eqref{eq:s2_{n}eg}.

\vspace{.2cm}
{\sc Step 2.} We start with the decomposition
\begin{gather*}
S_{1}(\lfloor nt\rfloor)\ =\ \sum_{p\in\mathcal{P}}\log p\, I_{A_{\lfloor nt\rfloor}}(p)\ =\ S^{(1,n)}_{1}(t)\ +\ S^{(2,n)}_{1}(t),
\shortintertext{where}
S^{(1,n)}_{1}(t)\ :=\ \sum_{p\in\mathcal{P}:p\le \sqrt{n}}\log p\, I_{A_{\lfloor nt\rfloor}}(p)
\shortintertext{and}
S^{(2,n)}_{1}(t)\ :=\ \sum_{p\in\mathcal{P}:p>\sqrt{n}}\log p\, I_{A_{\lfloor nt\rfloor}}(p).
\end{gather*}
For the first sum, we then proceed as follows. Using the prime number theorem,
$$ \sup_{t\in[0,1]}S^{(1,n)}_{1}(t)\ \le\ \sum_{p\in\mathcal{P}:p\le \sqrt{n}}\log p\ \simeq\  \sqrt{n}
$$
and therefore
\begin{gather*}
\frac{\sup_{t\in[0,1]}S^{(1,n)}_{1}(t)}{\sqrt{n\log n}}\ \overset{\mathbb{P}}{\to}\ 0
\shortintertext{as well as}
\frac{\E S^{(1,n)}_{1}(1)}{\sqrt{n\log n}}\ \to\ 0
\end{gather*}
as $n\to\infty$.

\vspace{.1cm}
In view of what has been shown so far, it remains to prove the asserted limit theorem for $(S^{(2,n)}_{1}(t))_{t\in [0,1]}$, i.e.
$$ \left(\frac{S^{(2,n)}_{1}(t)-\E S^{(2,n)}_{1}(t)}{\sqrt{n\log n}}\right)_{t\in [0,1]}\ \stackrel{J_{1}}{\Longrightarrow}\ (G(t))_{t\in[0,1]}, $$
which is possible because the processes $(I_{A_{\lfloor nt\rfloor}}(p))_{t\in[0,1]}$ and $(I_{A_{\lfloor nt\rfloor}}(q))_{t\in[0,1]}$ are independent for distinct primes $p,q>\sqrt{n}$. For the latter, just observe that the sets $p\N\cap [n]$ and $q\N\cap [n]$ are disjoint for such $p,q$.

\vspace{.2cm}
{\sc Step 3.} Our aim is to show that, as $n\to\infty$,
\begin{equation}\label{eq:proof_fdd_conv}
\left(\frac{\sum_{p\in\mathcal{P}:p>\sqrt{n}}\log p\,\big(I_{A_{\lfloor nt\rfloor}}(p)-\E I_{A_{\lfloor nt\rfloor}}(p)\big)}{\sqrt{n\log n}}\right)_{t\in [0,1]}\ \overset{\textit f.d.d.}{\Longrightarrow}\  (G(t))_{t\in[0,1]}.
\end{equation}
First we show convergence of the covariances. For $0<s\le t\le 1$, we have
\begin{align*}
{\rm Cov}&[S_{1}^{(2,n)}(t),S_{1}^{(2,n)}(s)]\\
&=\sum_{p\in\mathcal{P}\cap(\sqrt{n},nt]}\sum_{q\in\mathcal{P}\cap(\sqrt{n},ns]}\log p\log q\ {\rm Cov}[I_{A_{\lfloor nt\rfloor}}(p),I_{A_{\lfloor ns\rfloor}}(q)]\\
&=\sum_{p\in\mathcal{P}\cap(\sqrt{n},ns]}\log^{2}p\ {\rm Cov}[I_{A_{\lfloor nt\rfloor}}(p),I_{A_{\lfloor ns\rfloor}}(p)]\\
&=\sum_{p\in\mathcal{P}\cap(\sqrt{n},ns]}\log^{2}p \,\E I_{A_{\lfloor ns\rfloor}}(p)\big(1-\E I_{A_{\lfloor nt\rfloor}}(p)\big)\\
&=\sum_{p\in\mathcal{P}\cap(\sqrt{n},ns]}\log^{2}p \left(1-(1-\theta)^{\lfloor ns/p\rfloor}\right)(1-\theta)^{\lfloor nt/p\rfloor},
\end{align*}
where the independence of $(I_{A_{\lfloor nt\rfloor}}(p))_{t\in[0,1]}$ and $(I_{A_{\lfloor nt\rfloor}}(q))_{t\in[0,1]}$ enters when passing to the second equality. By invoking Lemma \ref{lem:cov_asymp} in the Appendix, we infer for $0<s\le t\le 1$
\begin{align*}
\lim_{n\to\infty}&{\rm Cov}\left[\frac{S_{1}^{(2,n)}(t)}{\sqrt{n\log n}},\frac{S_{1}^{(2,n)}(s)}{\sqrt{n\log n}}\right]\\
&=\ \sum_{j\ge 1}(1-(1-\theta)^{j})\hspace{-.5cm}\sum_{\textstyle i\in\left(\frac{tj}{s}-1,\frac{tj}{s}+\frac{t}{s}\right)}\hspace{-.3cm}(1-\theta)^{i} \left(\frac{t}{i}\wedge \frac{s}{j}-\frac{t}{i+1}\vee \frac{s}{j+1}\right)\\
&=:\ C_{1}(t,s).
\end{align*}

In order to prove formula \eqref{eq:G_covar2} for $C_{1}(t,s)$, write the latter in the form
\begin{align*}
C_{1}(t,s)\ &=\ \sum_{i,j\ge 1}(1-(1-\theta)^{j})(1-\theta)^{i}\left(\frac{t}{i}\wedge \frac{s}{j}-\frac{t}{i+1}\vee \frac{s}{j+1}\right)^{+}\\
&=\ \sum_{i,j\ge 1}\left(\sum_{k=1}^{j}p_{k}\right)\left(\sum_{l\ge i+1}p_{l}\right)\left(\frac{t}{i}\wedge \frac{s}{j}-\frac{t}{i+1}\vee \frac{s}{j+1}\right)^{+}\\
&=\ \sum_{k\ge 1}\sum_{l\ge 2}p_{k}p_{l} \sum_{j\ge k}\sum_{i=1}^{l-1}\left(\frac{t}{i}\wedge \frac{s}{j}-\frac{t}{i+1}\vee \frac{s}{j+1}\right)^{+}.
\end{align*}
We claim that the inner double sum equals $(s/k-t/l)^{+}$. Consider two intervals $(t/l,t]$ and $(0,s/k]$. Cover the first interval by the disjoint subintervals $(t/(i+1),t/i]$, $i=1,\ldots,l-1$ and, analogously, the second interval by the disjoint subintervals $(s/(j+1),s/j]$, $j\ge k$. Then $\left(\frac{t}{i}\wedge \frac{s}{j}-\frac{t}{i+1}\vee \frac{s}{j+1}\right)^{+}$ equals the length of the intersection of $(t/(i+1),t/i]$ and $(s/(j+1),s/j]$ and is zero if they are disjoint. The total sum of these lengths equals the length of the intersection of the original intervals $(t/l,t]$ and  $(0,s/k]$, thus $(s/k-t/l)^{+}$. Consequently,
$$ C_{1}(t,s)\ =\ \sum_{k\ge 1}\sum_{l\ge 2}p_{k}p_{l}\left(\frac{s}{k}-\frac{t}{l}\right)^{+}\ =\ \sum_{k\ge 1}\sum_{l\ge 1}p_{k}p_{l}\left(\frac{s}{k}-\frac{t}{l}\right)^{+}, $$
where the second equality holds because $(s/k-t)^{+}=0$. Let $\eta_{1},\eta_{2}$ be two independent geometric random variables on $\N$, viz.
$$ \P\{\eta_{1}=k\}\ =\ \P\{\eta_{2}=k\}\ =\ \theta(1-\theta)^{k-1},\quad k\in\N. $$
Then
\begin{align*}
C_{1}(t,s)\ &=\ \E \left(\frac{s}{\eta_{1}}-\frac{t}{\eta_{2}}\right)^{+}\ =\ \E \left(\frac{s}{\eta_{1}}-\frac{s}{\eta_{1}}\wedge \frac{t}{\eta_{2}}\right)\\
&=\ \sum_{k\geq 1}\frac{s}{k}p_{k}\ -\ \sum_{k,l\ge 1}\left(\frac{s}{k}\wedge \frac{t}{l}\right)p_{k}p_{l}
\end{align*}
which is the asserted result as $s<t$.

\vspace{.1cm}
To complete the proof of \eqref{eq:proof_fdd_conv}, it remains to verify the Lindeberg condition
\begin{equation}\label{eq:proof_{l}indeberg}
\lim_{n\to\infty}\sum_{p\in\mathcal{P}\cap(\sqrt{n},nt]}\E \left[|V_{n,p}(t)|^{2}\1_{\{|V_{n,p}(t)|>\varepsilon \}}\right]\ =\ 0.
\end{equation}
for any $t\in [0,1]$ and $\varepsilon>0$, where
$$ V_{n,p}(t)\ :=\ \frac{\log p\left(I_{A_{\lfloor nt\rfloor}}(p)-\E I_{A_{\lfloor nt\rfloor}}(p)\right)}{\sqrt{n\log n}} $$
for $p\in\mathcal{P}\cap (\sqrt{n},n]$. But this is obvious because $|V_{n,p}(t)|\le \sqrt{\frac{\log n}{n}}$ for all such $p$ and $t\in [0,1]$, $n\in\N$.

\vspace{.2cm}
{\sc Step 4.} In order to finally show the functional limit theorem, we will apply Theorem 10.6 from \cite{Pollard:90} that provides general conditions for the convergence
of triangular arrays of row-wise independent processes to a Gaussian limit. Actually, this theorem yields convergence in the sense of convergence in the space of bounded functions with the usual supremum-norm, which is stronger. Convergence in $D[0,1]$ with the $J_{1}$-topology follows as a direct consequence.

To conform with the notation in  \cite{Pollard:90}, put
$$ f_{n,p}(t):=\frac{\log p\, I_{A_{\lfloor nt\rfloor}}(p)}{\sqrt{n\log n}}\quad\text{and}\quad F_{n,p}:=\frac{\log p\, I_{A_{n}}(p)}{\sqrt{n\log n}} $$
for $p\in\mathcal{P}\cap (\sqrt{n},n]$. Conditions (ii) and (iv) of Theorem 10.6 in \cite{Pollard:90} were checked in Step 3. Condition (iii) is obvious. The manageability of the family $(f_{n,p}(\cdot))_{p}$ (Condition (i) of Theorem 10.6 in \cite{Pollard:90}) follows from the monotonicity of
$(f_{n,p}(t))_{t\in [0,1]}$ in $t$ for every fixed $n$ and $p$, and the observation in the paragraph just before Theorem 11.17 in \cite[p.~221]{Kosorok:08}. It remains to verify condition (v). To this end, introduce the function
$$ \rho_{n}(s,t)\ :=\ \left(\frac{\sum_{p\in\mathcal{P}\cap (\sqrt{n},n]}\log^{2}p\ \E | I_{A_{\lfloor nt\rfloor}}(p)-I_{A_{\lfloor ns\rfloor}}(p)|^{2}}{n\log n}\right)^{1/2} $$
for $0\le s, t\le 1$. Note that
\begin{align*}
&\E | I_{A_{\lfloor nt\rfloor}}(p)-I_{A_{\lfloor ns\rfloor}}(p)|^{2}\ =\ \P\{I_{A_{\lfloor nt\rfloor}}(p)-I_{A_{\lfloor ns\rfloor}}(p)=1\}\\
&\hspace{4.11cm}=\ 1-\P\{I_{A_{\lfloor nt\rfloor}}(p)-I_{A_{\lfloor ns\rfloor}}(p)=0\}\\
&\hspace{4.11cm}=\ (1-\theta)^{\lfloor ns/p\rfloor}-(1-\theta)^{\lfloor nt/p\rfloor}.
\shortintertext{and therefore}
&\rho_{n}(s,t) := \left(\frac{\sum_{p\in\mathcal{P}\cap (\sqrt{n},n]}\log^{2}p \left((1-\theta)^{\lfloor ns/p\rfloor}-(1-\theta)^{\lfloor nt/p\rfloor}\right)}{n\log n}\right)^{1/2}
\end{align*}
for $0<s\le t\le 1$. Decomposing the numerator on the right-hand side as
\begin{align*}
\sum_{p\in\mathcal{P}\cap (\sqrt{n},n]}&\log^{2}p\, \left((1-\theta)^{\lfloor ns/p\rfloor}-(1-\theta)^{\lfloor nt/p\rfloor}\right)\\
&=\ \sum_{p\in\mathcal{P}\cap (\sqrt{n},ns]}\log^{2}p\, (1-\theta)^{\lfloor ns/p\rfloor}\ +\ \sum_{p\in\mathcal{P}\cap (ns,n]}\log^{2}p\\
&\hspace{.1cm}-\ \sum_{p\in\mathcal{P}\cap (\sqrt{n},nt]}\log^{2}p\, (1-\theta)^{\lfloor nt/p\rfloor}\ -\  \sum_{p\in\mathcal{P}\cap (nt,n]}\log^{2}p,
\end{align*}
and applying Lemma \ref{lem:simple_sum_asymp} in conjunction with formula \eqref{eq:log^{2}_asymp} in the Appendix, we deduce
$$ \lim_{n\to\infty}\rho_{n}(s,t)=\sqrt{(1-h(1-\theta))(t-s)} $$
for $0<s\le t\le 1$. Now let $(s_{n})_{n\ge 1}$ and $(t_{n})_{n\ge 1}$ be two deterministic sequences in $[0,1]$ such that $s_{n}-t_{n}\to 0$ as $n\to\infty$. We must show that
\begin{gather*}
\lim_{n\to\infty}\rho_{n}(s_{n},t_{n})=0,
\shortintertext{or, equivalently,}
\lim_{n\to\infty}\frac{\sum_{p\in\mathcal{P}\cap (\sqrt{n},n]}\log^{2}p \left|(1-x)^{\lfloor ns_{n}/p\rfloor}-(1-x)^{\lfloor nt_{n}/p\rfloor}\right|}{n\log n }\ =\ 0.
\end{gather*}
Putting $u_{n}:=s_{n}\wedge t_{n}$ and $v_{n}:=s_{n}\vee t_{n}$, this follows if
\begin{equation}\label{eq:pollards_cond5}
\lim_{n\to\infty}\frac{\sum_{p\in\mathcal{P}:p\le n}\log p \left((1-\theta)^{\lfloor nu_{n}/p\rfloor}-(1-\theta)^{\lfloor nv_{n}/p\rfloor}\right)}{n}\ =\ 0.
\end{equation}
Using Lemma \ref{lem:cheb_sum_with_remainder}, we find that, for a suitable constant $C>0$,
\begin{align*}
\frac{1}{n}\sum_{p\in\mathcal{P}:p\le n}&\log p\left((1-\theta)^{\lfloor nu_{n}/p\rfloor}-(1-\theta)^{\lfloor nv_{n}/p\rfloor}\right)\\
&\le\ (v_{n}-u_{n})h(1-\theta)\ +\ \frac{Cu_{n}}{\log (nu_{n}+2)}\ +\ \frac{Cv_{n}}{\log (nv_{n}+2)}\\
&\le\ (v_{n}-u_{n})h(1-\theta)\ +\ \frac{2C}{\log (n+2)},
\end{align*}
and the last line converges to $0$ because $v_{n}-u_{n}=|s_{n}-t_{n}|\to 0$, as $n\to\infty$.

It remains to note that Theorem 10.6 in \cite{Pollard:90} guarantees that the limit process a.s. has uniformly continuous paths. This completes the proof of Theorem \ref{thm:main}.\qed

\section{Proof of Proposition \ref{prop:G_process}}
(a) Since $(G(t))_{t\in[0,1]}$ defined in Theorem \ref{thm:main} is Gaussian, the same holds true for the process $(\E[B(t/\mathcal{G}_\theta)|B])_{t\in[0,1]}$ as one can readily see from the representation
\begin{align*}
\E[B(t/\mathcal{G}_\theta)|B]\ &=\ \sum_{k\ge 1}p_{k} B(t/k)\ =\sum_{k\ge 1}p_{k} \int_{0}^{\infty}\1_{\{z\le t/k\}}\ {\rm d}B(z)\\
&=\ \int_{0}^{\infty}\left(\sum_{k\ge 1}p_{k}\1_{\{z\le t/k\}}\right)\ {\rm d}B(z).
\end{align*}
By the independence assumption, it is enough to check the equality of covariances which follows immediately from the identities
\begin{align*}
{\rm Cov}[\E[B(t/\mathcal{G}_\theta)|B],\E[B(s/\mathcal{G}_\theta)|B]]\ &=\
{\rm Cov}\left[\sum_{k\ge 1}p_{k} B(t/k),\sum_{l\ge 1}p_{l}B(s/l)\right]\\
&=\ \sum_{k,l\ge 1}p_{k}p_{l}\left(\frac{t}{k}\wedge \frac{s}{l}\right)
\end{align*}
and
$$ {\rm Cov}[B(t(\E \mathcal{G}_{\theta}^{-1})),B(s(\E \mathcal{G}_{\theta}^{-1}))]\ =\ (t\wedge s)\E \mathcal{G}_{\theta}^{-1}\ =\ \sum_{k\ge 1}\left(\frac{s}{k}\wedge \frac{t}{k}\right)p_{k} $$
for $0\le s,t\le 1$.

\vspace{.2cm}
(b) Since the series on the right-hand side of \eqref{eq:g_series} is a centered Gaussian process, it suffices again to check the equality of covariances. We have
\begin{gather*}
\sqrt{\theta(1-\theta)}\sum_{i\ge 1} (1-\theta)^{(i-1)/2}\left(B_{i}\left(\frac t i\right)- \sum_{k\ge i+1} \theta(1-\theta)^{k-i-1} B_{i}\left(\frac tk\right)\right)\\
=:\ \sum_{i\ge 1}\sum_{k\ge i}a_{ik}B_{i}\left(\frac{t}{k}\right),
\end{gather*}
where
$$
a_{ik}:=
\begin{cases}
\hfill\theta^{1/2} (1-\theta)^{i/2}, & \text{if }k=i,\\
-\theta^{3/2}(1-\theta)^{k-i/2-1}, & \text{if }k>i.\\
\end{cases}
$$
Using independence of $B_{1},B_2,\ldots$ and the formula ${\rm Cov}[B_{i}(s),B_{i}(t)]=s\wedge t$, we obtain for its covariance function
\begin{align*}
\rho(s,t)\ :=~&{\rm Cov}\left[\sum_{i\ge 1}\sum_{k\ge i}a_{ik}B_{i}\left(\frac{t}{k}\right),\sum_{j\ge 1}\sum_{l\ge j}a_{jl}B_j\left(\frac{s}{l}\right)\right]\\
=~&\sum_{i\ge 1}\sum_{k\ge i}\sum_{l\ge i}a_{ik}a_{il}\left(\frac{t}{k}\wedge \frac{s}{l}\right)\ =\ \sum_{k\ge 1}\sum_{l\ge 1}\left(\frac{t}{k}\wedge \frac{s}{l}\right)\sum_{i=1}^{k\wedge l}a_{ik}a_{il}.
\end{align*}
If $k=l$, then
\begin{align*}
\sum_{i=1}^{k\wedge l}a_{ik}a_{il}\ &=\ \sum_{i=1}^{k}a_{ik}^{2}\ =\ \theta(1-\theta)^{k}+\theta^{3}\sum_{i=1}^{k-1} (1-\theta)^{2k-i-2}\\
&=\ \theta(1-\theta)^{k}+\theta^{2}(1-\theta)^{k-1}(1-(1-\theta)^{k-1})\\
&=\ p_{k}-p_{k}^{2}\ =\ p_{k}-p_{k}p_{l},
\shortintertext{while for $k<l$}
\sum_{i=1}^{k\wedge l}a_{ik}a_{il}\ &=\ a_{kk}a_{kl}+\sum_{i=1}^{k-1}a_{ik}a_{il}\\
&=\ -\theta^{2}(1-\theta)^{l-1}
+
\theta^{3}\sum_{i=1}^{k-1}(1-\theta)^{k+l-i-2}\ =\ -p_{k}p_{l}.
\end{align*}
A combination of these results yields
\begin{align*}
\rho(s,t)\ =\ \sum_{k\ge 1}\frac{s\wedge t}{k}p_{k}\,-\,\sum_{k,l\ge 1}\left(\frac{t}{k}\wedge \frac{s}{l}\right)p_{k}p_{l},
\end{align*}
which shows the desired equality of distributions and completes the proof of Proposition \ref{prop:G_process}.\qed

\section{Proof of Theorem \ref{thm:SLLN L_n}}\label{sec:SLLN L_n}

For $\varepsilon>0$ and $n\in\N$, we define the events
$$ A_{n}(\varepsilon)\ :=\ \left\{|\log L_{n}-\E\log L_{n}|>\varepsilon\,\E\log L_{n}\right\}. $$
Proposition 2.2 from \cite{CillRueSarka:14} provides us with
\begin{equation}\label{eq:variance}
{\rm Var}[\log L_{n}]\ =\ O(n\log^{2}n)
\end{equation}
as $n\to\infty$ which in combination with the expansion of $\E\log L_{n}$ in \eqref{eq:mean} implies that
\begin{equation}\label{eq:bound A_n(eps)}
\P\{A_{n}(\varepsilon)\}\ \le\ \frac{C\log^{2}(n+1)}{n}
\end{equation}
for all $n\ge 1$ and some constant $C=C(\varepsilon,\theta)>0$. Putting $n_{k}:=[k\log^{4}k]$ for $k\ge 1$, it follows from \eqref{eq:bound A_n(eps)} that $\sum_{k\ge 2}\P\{A_{n_{k}}(\varepsilon)\}<\infty$ for any $\varepsilon>0$ and thus
\begin{equation}\label{eq:SLLN L_n_k}
\lim_{k\to\infty}\frac{\log L_{n_{k}}}{\E\log L_{n_{k}}}\ =\ 1\quad\text{a.s.}
\end{equation}
by the Borel-Cantelli lemma. For arbitrary $n\in\N$, let $k(n)$ be such that $n_{k(n)}\le n<n_{k(n)+1}$ and notice that, as a trivial consequence of \eqref{eq:mean},
$$ \lim_{k\to\infty}\frac{\E\log L_{n_{k+1}}}{\E\log L_{n_{k}}}\ =\ 1. $$
The proof of the theorem is now completed by a combination of the latter fact with \eqref{eq:SLLN L_n_k} and the inequalities
$$ \frac{\log L_{n_{k}}}{\E\log L_{n_{k}}}\cdot\frac{\E\log L_{n_{k}}}{\E\log L_{n_{k+1}}}\ \le\ \frac{\log L_{n}}{\E\log L_{n}}\ \le\ \frac{\log L_{n_{k+1}}}{\E\log L_{n_{k+1}}}\cdot\frac{\E\log L_{n_{k+1}}}{\E\log L_{n_{k}}}, $$
valid for any $n\ge 1$.\qed

\section{Proof of Theorems \ref{thm:main2} and \ref{thm:main3}}

\subsection{Proof of Theorem \ref{thm:main2}}
Let $R(n)$ be the number of remaining integers in $[n]$, i.e. $R(n):=|A_{n}|$, and note that $R(n)$ has a binomial distribution with parameters $n$ and $\theta(n)$. Since $\theta(n)\simeq\lambda/n$, the classical Poisson limit theorem gives
$$ R(n)\ \overset{d}{\to}\ \Pi(\lambda) $$
as $n\to\infty$. Let $(X_{1}^{(n)},X_{2}^{(n)},\ldots,X_{R(n)}^{(n)})$ denote the ordered sample of remaining integers which, conditioned upon $R(n)=k$, has the same distribution as an ordered $k$-sample without replacement from the set $[n]$. In order to show that, as $n\to\infty$,
$$ \frac{\log \LCM(X_{1}^{(n)},X_{2}^{(n)},\ldots,X_{R(n)}^{(n)})}{\log n}\ \overset{d}{\to}\  \Pi(\lambda), $$
it is enough to show that, conditioned upon $R(n)=k$ for any fixed $k\in\N$,
\begin{equation}\label{eq:poisson_limit_goal1}
\frac{\log \LCM(X_{1}^{(n)},X_{2}^{(n)},\ldots,X_{k}^{(n)})}{\log n}\ \overset{\P}{\to}\ k 
\end{equation}
as $n\to\infty$. Given any finite set of positive integers $\{n_{1},n_{2},\ldots,n_{k}\}$, we have that
$$
\frac{n_{1}n_{2}\cdots n_{k}}{\prod_{1\le i<j\le k}\GCD(n_{i},n_j)}\ \le\ \LCM(n_{1},n_{2},\ldots,n_{k})\ \le\ n_{1}n_{2}\cdots n_{k}.
$$
For \eqref{eq:poisson_limit_goal1}, it hence suffices to verify that, given $R(n)=k$,
\begin{gather}
\frac{\sum_{i=1}^{k}\log X_{i}^{(n)}}{\log n}\ \overset{\P}{\to}\ k
\label{eq:poisson_limit_goal2}
\shortintertext{and}
\frac{\sum_{1\leq i < j \leq k}\log \GCD(X_{i}^{(n)},X_{j}^{(n)})}{\log n}\ \overset{\P}{\to}\ 0.
\label{eq:poisson_limit_goal3}
\end{gather}
Let $(U_{k}^{(n)})_{k\in\N}$ be a sequence of i.i.d. random variables with a uniform distribution on $[n]$. Then, as already stated above, the conditional law of $(X_{1}^{(n)},\ldots,X_{k}^{(n)})$ given $R(n)=k$ for any fixed $k\in\N$, is the same as the conditional law of $(U_{(1)}^{(n)},\ldots,U_{(k)}^{(n)})$, the order statistics of $(U_{1}^{(n)},\ldots,U_{k}^{(n)})$, given the event
$$ A_{n,k}\ :=\ \left\{U_{i}^{(n)}\ne U_j^{(n)}:\,i,j=1,\ldots, k,i\ne j,\right\}. $$
Since $\P\{A_{n,k}\}$ tends to 1 for any $k\in\N$ and $n\to\infty$, \eqref{eq:poisson_limit_goal2} and \eqref{eq:poisson_limit_goal3} are equivalent to
\begin{gather}
\frac{\sum_{i=1}^{k}\log U_{i}^{(n)}}{\log n}\ \overset{\P}{\to}\ k\label{eq:poisson_limit_goal4}
\shortintertext{and}
\frac{\log \GCD(U_{1}^{(n)},U_{2}^{(n)})}{\log n}\overset{\P}{\to} 0,\label{eq:poisson_limit_goal5}
\end{gather}
respectively. Assertion \eqref{eq:poisson_limit_goal4} follows directly from
$$ \lim_{n\to\infty}\P\left\{1-\varepsilon<\frac{\log U_{1}^{(n)}}{\log n}\le 1 \right\}\ =\ \lim_{n\to\infty}\P\{n^{1-\varepsilon}<U_{1}^{(n)}\le n\}\ =\ 1 $$
for any $\varepsilon\in (0,1)$ and Slutsky's lemma.

\vspace{.1cm}
For \eqref{eq:poisson_limit_goal5}, we will in fact prove the stronger result that, as $n\to\infty$,
\begin{equation}\label{eq:gcd_conv}
\log \GCD(U_{1}^{(n)},U_{2}^{(n)})\ \overset{d}{\to}\ \xi
\end{equation}
for some proper nondegenerate random variable $\xi$ to be defined below. Writing
$$
U_{1}^{(n)}\ =\ \prod_{p\in\mathcal{P}}p^{\lambda_{p}(U_{1}^{(n)})}\quad\text{and}\quad U_{2}^{(n)}\ =\ \prod_{p\in\mathcal{P}}p^{\lambda_{p}(U_{2}^{(n)})},
$$
where $\lambda_{p}(m)\ge 0$ is the power of prime $p$ in the prime decomposition of $m\in\N$, we have
$$ \log \GCD(U_{1}^{(n)},U_{2}^{(n)})\ =\ \sum_{p\in\mathcal{P}}\left(\lambda_{p}(U_{1}^{(n)})\wedge \lambda_{p}(U_{2}^{(n)})\right)\log p. $$
It is a simple fact, see for example the last display on p.~28 in \cite{ArrBarTav:03}, that
\begin{equation}\label{eq:conv_to_geom}
\left(\lambda_{p}(U_{1}^{(n)})\right)_{p\in\mathcal{P}}\ \overset{d}{\to}\ \left(Z_{1,p}\right)_{p\in\mathcal{P}},
\end{equation}
where $(Z_{1,p})_{p\in\mathcal{P}}$ forms a sequence of independent random variables and $Z_{1,p}$ has a geometric distribution on $\{0,1,2,\ldots,\}$ with parameter $1-\frac{1}{p}$. Likewise,
\begin{equation*}
\left(\lambda_{p}(U_{2}^{(n)})\right)_{p\in\mathcal{P}}\ \overset{d}{\to}\ \left(Z_{2,p}\right)_{p\in\mathcal{P}},
\end{equation*}
where $(Z_{1,p})_{p\in\mathcal{P}}\overset{d}{=}(Z_{2,p})_{p\in\mathcal{P}}$, $(Z_{1,p})_{p\in\mathcal{P}}$ and  $(Z_{2,p})_{p\in\mathcal{P}}$ are independent. The series
$$ \xi\ :=\ \sum_{p\in\mathcal{P}}(Z_{1,p}\wedge Z_{2,p})\log p $$
converges a.s. because it has finite mean, viz.
\begin{equation}\label{eq:series_converges}
\sum_{p\in\mathcal{P}}\E (Z_{1,p}\wedge Z_{2,p})\log p\ =\ \sum_{p\in\mathcal{P}}\frac{\log p}{p^{2}-1}<\infty.
\end{equation}
We note in passing that the explicit form of the distribution of $\xi$ may be found in \cite{DiaconisErdos:04}.
According to Theorem 3.2 in \cite{Billingsley:99}, a sufficient condition for \eqref{eq:gcd_conv} is that
\begin{equation}\label{eq:billingsley_cond}
\lim_{m\to\infty}\limsup_{n\to\infty}\P\left\{\sum_{p\in\mathcal{P}:p\ge m}\left(\lambda_p(U_{1}^{(n)})\wedge\lambda_p(U_{2}^{(n)})\right)\log p\ge\varepsilon \right\}\ =\ 0
\end{equation}
for any $\varepsilon>0$. We will show the in fact stronger condition (by Markov's inequality)
\begin{equation}\label{eq:billingsley_cond2}
\lim_{m\to\infty}\limsup_{n\to\infty}\sum_{p\in\mathcal{P}:p\ge m}\E\left(\lambda_p(U_{1}^{(n)})\wedge\lambda_p(U_{2}^{(n)})\right)\log p\ =\ 0.
\end{equation}
To this end, note that
\begin{gather*}
\E \left(\lambda_p(U_{1}^{(n)})\wedge\lambda_p(U_{2}^{(n)})\right)\ =\ \sum_{i\ge 1}\P\{\lambda_p(U_{1}^{(n)})\ge i,\lambda_p(U_{2}^{(n)})\ge i\}\\
=\ \sum_{i\ge 1}\left(\frac{1}{n}\left\lfloor\frac{n}{p^{i}}\right\rfloor\right)^{2}\ \le\ \sum_{i\ge 1}\frac{1}{p^{2i}}\ =\ \frac{1}{p^{2}-1}.
\end{gather*}
Relation \eqref{eq:billingsley_cond2} now follows from \eqref{eq:series_converges}, thus completing the proof of Theorem \ref{thm:main2}.\qed

\subsection{Proof of Theorem \ref{thm:main3}}
Using Lemma \ref{lem:lcm_via_mangoldt}, we can write
$$ \sum_{k=1}^{n}\Lambda(k)-\log L_{n}\ =\ \sum_{k=1}^{n}\Lambda(k)\left(1-I_{A_{n}}(k)\right) $$
and infer
$$ \sum_{k\le n/2}\Lambda(k)\left(1-I_{A_{n}}(k)\right)\ \overset{\P}{\to}\ 0\quad (n\to\infty) $$
from 
\begin{gather*}
\P\left\{\sum_{k\le n/2}\Lambda(k)\left(1-I_{A_{n}}(k)\right)>0\right\}\ \leq \ \P\{I_{A_{n}}(k)=0\text{ for some }k\le n/2\}\\
\le\ \sum_{k\le n/2}\P\{I_{A_{n}}(k)=0\}\ \le\ \sum_{k\le n/2}\P\{k\notin A_{n},2k\notin A_{n}\}\\
\le \frac{n}{2}\,\frac{(\lambda+o(1))^{2}\log^{2}n}{n^{2}}.
\end{gather*}

Left with the sum $\sum_{n/2<k\le n}\Lambda(k)\left(1-I_{A_{n}}(k)\right)$, we note that the random variables $\{1-I_{A_{n}}(k): n/2<k\le n\}$ are independent indicators satisfying
$$ \P\{1-I_{A_{n}}(k)=1\}\ =\ \P\{k\notin A_{n}\}\ =\ 1-\theta(n)\ \simeq\  \frac{\lambda \log n}{n} $$
as $n\to\infty$. By definition of the von Mangoldt function $\Lambda$, we have
\begin{align*}
\sum_{n/2<k\le n}\Lambda(k)\left(1-I_{A_{n}}(k)\right)\ &=\ \sum_{p\in\mathcal{P}}\log p \left(1-I_{A_{n}}(p)\right)\1_{\{n/2<p\le n\}}\\
&+\sum_{p\in\mathcal{P}}\sum_{l\ge 2}\log p \left(1-I_{A_{n}}(p^l)\right)\1_{\{n/2<p^l\le n\}}.
\end{align*}
The expectation of the last term on the right-hand side equals
\begin{align*}
(1-\theta(n))&\sum_{p\in\mathcal{P}}\sum_{l\ge 2}\log p \1_{\{n/2<p^l\le n\}}\\
&=\ \frac{(\lambda+o(1)) \log n}{n}\sum_{p\in\mathcal{P}}\sum_{l\ge 2}\log p\,\1_{\{n/2<p^l\le n\}}\\
&\le\ \frac{(\lambda+o(1)) \log n}{n}\sum_{p\in\mathcal{P}}\sum_{l\ge 2}\log p\, \1_{\{p^l\le n\}}\\
&=\ \frac{(\lambda+o(1))\log n}{n}(\psi(n)-\vartheta(n)),
\end{align*}
where $\psi$ and $\vartheta$ are the Chebyshev functions, see Formulae \eqref{eq:chebyshev_func1_def} and \eqref{eq:chebyshev_func2_def}. By Theorem 4.1 in \cite{Apostol:76},
$$ \psi(n)-\vartheta(n)\ =\ O(\sqrt{n}\log^{2}n) $$
as $n\to\infty$, and this in combination with Markov's inequality implies
$$ \sum_{p\in\mathcal{P}}\sum_{l\ge 2}\log p \left(1-I_{A_{n}}(p^l)\right)\1_{\{n/2<p^l\le n\}}\ \overset{\P}{\to}\ 0 $$
as $n\to\infty$. It remains to show that
$$
\frac{\sum_{p\in\mathcal{P}}\log p \left(1-I_{A_{n}}(p)\right)\1_{\{n/2<p\le n\}}}{\log n}\ \overset{d}{\to}\ \Pi(\lambda/2).
$$
In view of the obvious inequalities
\begin{align*}
\log(n/2)\sum_{p\in\mathcal{P}}\left(1-I_{A_{n}}(p)\right)\1_{\{n/2<p\le n\}}\ &\le\  \sum_{p\in\mathcal{P}}\log p \left(1-I_{A_{n}}(p)\right)\1_{\{n/2<p\le n\}}\\
&\le\ \log n\sum_{p\in\mathcal{P}}\left(1-I_{A_{n}}(p)\right)\1_{\{n/2<p\le n\}},
\end{align*}
the claim of the theorem follows from the observation that
$$ \sum_{p\in\mathcal{P}\cap (n/2,n]}\left(1-I_{A_{n}}(p)\right)\ =\ \sum_{p\in\mathcal{P}\cap (n/2,n]}\1_{\{p\notin A_{n}\}}\ \overset{d}{\to}\  \Pi(\lambda/2),\quad n\to\infty.$$
The latter convergence holds by the classic Poisson limit theorem for independent indicators, here Bernoulli variables with parameter $1-\theta(n)$. The factor $1/2$ in the parameter of the Poisson random variable appears because, with $\pi(x)$ denoting the number of primes $\le x$,  the number of summands is $\pi(n)-\pi(n/2)\sim \frac{n}{2\log n}$, as $n\to\infty$ by the prime number theorem. The proof of Theorem \ref{thm:main3} is complete.\qed

\section{Appendix}
We have used the following estimate for the tails of convergent series involving primes.
\begin{lemma}\label{lem:tail_est}
There exists a positive constant $C$ such that, for all $n\in\N$ and $k\ge 2$,
\begin{equation}\label{eq:tail_est}
\sum_{p\in\mathcal{P}:p\ge n}\frac{\log p}{p^{k}}\ \le\ \frac{C}{n^{k-1}}.
\end{equation}
\end{lemma}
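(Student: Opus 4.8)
The plan is to reduce the sum over primes to an integral against the first Chebyshev function $\vartheta$ and then invoke the elementary bound $\vartheta(x)=O(x)$, which the paper already uses in the equivalent form $\sum_{p\le x}\log p\simeq x$. So I fix once and for all a constant $C_{0}>0$ with $\vartheta(x)\le C_{0}x$ for every $x\ge1$; crucially this $C_{0}$ does not depend on $k$, and it is exactly this independence that will make the final estimate uniform over $k\ge2$.

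Next I would apply Abel (partial) summation. Writing $A(x):=\sum_{n\le p\le x}\log p$ for $x\ge n$ — so that $A$ is the summatory function of the very terms we wish to control and $A(x)\le\vartheta(x)\le C_{0}x$ — Abel's formula gives, for every $y\ge n$,
$$
\sum_{n\le p\le y}\frac{\log p}{p^{k}}\ =\ \frac{A(y)}{y^{k}}\ +\ k\int_{n}^{y}\frac{A(x)}{x^{k+1}}\,{\rm d}x.
$$
Because $k\ge2>1$, the boundary term satisfies $A(y)y^{-k}\le C_{0}y^{1-k}\to0$ as $y\to\infty$; letting $y\to\infty$ and discarding this nonnegative term, we obtain
$$
\sum_{p\ge n}\frac{\log p}{p^{k}}\ \le\ k\int_{n}^{\infty}\frac{A(x)}{x^{k+1}}\,{\rm d}x\ \le\ kC_{0}\int_{n}^{\infty}x^{-k}\,{\rm d}x\ =\ \frac{k}{k-1}\cdot\frac{C_{0}}{n^{k-1}}\ \le\ \frac{2C_{0}}{n^{k-1}},
$$
the last inequality using $k/(k-1)\le2$ for $k\ge2$. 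Thus \eqref{eq:tail_est} holds with $C:=2C_{0}$, and since $\int_{1}^{\infty}x^{-k}\,{\rm d}x=1/(k-1)<\infty$ the same chain of estimates also covers the case $n=1$.

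I do not expect a genuine obstacle here, but there is one point that must not be glossed over: the uniformity in $k$. One cannot simply bound the sum over primes by the corresponding sum over all integers $m\ge n$, because $\sum_{m\ge n}(\log m)m^{-k}\asymp(\log n)\,n^{-(k-1)}$ carries a spurious logarithmic factor; it is precisely the thinness of the primes, encoded in $\vartheta(x)=O(x)$, that removes it. Everything else is automatic: the Chebyshev constant $C_{0}$ does not depend on $k$, and the only $k$-dependent factor produced along the way, $k/(k-1)$, stays bounded by $2$ on $\{k\ge2\}$. An equally short, fully elementary alternative to partial summation is a dyadic decomposition: grouping the primes $\ge n$ into the blocks $p\in[2^{j}n,2^{j+1}n)$, $j\ge0$, one estimates $\sum_{2^{j}n\le p<2^{j+1}n}\frac{\log p}{p^{k}}\le(2^{j}n)^{-k}\,\vartheta(2^{j+1}n)\le2C_{0}\,(2^{j})^{1-k}\,n^{1-k}$, and summing the geometric series $\sum_{j\ge0}(2^{j})^{1-k}\le2$ (valid for $k\ge2$) yields \eqref{eq:tail_est} with $C=4C_{0}$.
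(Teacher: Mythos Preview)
Your proof is correct and follows essentially the same route as the paper: partial summation against a prime-counting function followed by the Chebyshev bound, then observing that the factor $k/(k-1)$ is at most $2$ for $k\ge2$. The only cosmetic difference is that the paper integrates by parts against $\pi(x)$ and invokes $\pi(x)\le C_{1}x/\log x$, whereas you sum by parts against $A(x)\le\vartheta(x)$ and invoke $\vartheta(x)\le C_{0}x$; the two are equivalent and lead to the same constant $C=2C_{0}$.
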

\begin{proof}
For any $n\ge 2$ and with $\pi(x)$ as above, integration by parts yields
\begin{align*}
\sum_{p\in\mathcal{P}:p\ge n}\frac{\log p}{p^{k}}\ &=\ \int_{[n,\infty)}\frac{\log x}{x^{k}}\ {\rm d}\pi(x)\\
&=\ \frac{\log x}{x^{k}}\pi(x)\bigg|_{n}^{\infty}\ +\ \int_{n}^{\infty}\pi(x)\frac{kx^{k-1}\log x-x^{k-1}}{x^{2k}}\ {\rm d}x\\
&\le\ \int_{n}^{\infty}\pi(x)\frac{kx^{k-1}\log x}{x^{2k}}\ {\rm d}x.
\end{align*}
By the prime number theorem, $\pi(x)\le C_{1}x/\log x$ for some constant $C_{1}>0$ and all $x\ge 2$. Consequently,
$$ \sum_{p\in\mathcal{P}:p\ge n}\frac{\log p}{p^{k}}\ \le\ C_{1}k\int_{n}^{\infty}\frac{{\rm d}x}{x^{k}}\ =\ \frac{C_{1}k}{k-1}n^{1-k}\le \frac{2C_{1}}{n^{k-1}} $$
and thus \eqref{eq:tail_est} holds with $C:=2C_{1}$.
\end{proof}

\begin{lemma}\label{lem:simple_sum_asymp}
For any fixed $x,t\in (0,1)$,
$$ \sum_{p\in\mathcal{P}:p\in (\sqrt{n},nt]}\log^{2}p\,(1-x)^{\lfloor nt/p\rfloor}\ \simeq\ t\cdot n\log n\cdot h(1-x), $$
where $h(x)=\sum_{k\ge 1}\frac{x^{k}}{k(k+1)}$.
\end{lemma}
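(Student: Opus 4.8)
Throughout fix $x,t\in(0,1)$ and put $z:=1-x\in(0,1)$. The plan is to group the sum according to the value of $\lfloor nt/p\rfloor$, observe that the ``tails'' (in both $p$ and the grouping index) are negligible, and then pass to the limit term by term. Since $\lfloor nt/p\rfloor=k$ is equivalent to $nt/(k+1)<p\le nt/k$, I write
\[
\sum_{\substack{p\in\mathcal{P}\\ \sqrt n<p\le nt}}\log^{2}p\,(1-x)^{\lfloor nt/p\rfloor}\;=\;\sum_{k\ge 1}z^{k}\,T_{n,k},\qquad
T_{n,k}:=\sum_{\substack{p\in\mathcal{P},\ p>\sqrt n\\ nt/(k+1)<p\le nt/k}}\log^{2}p .
\]
Note that $T_{n,k}=0$ unless $nt/k>\sqrt n$, i.e.\ $k<\sqrt n\,t$, because otherwise the interval $(\max\{\sqrt n,\,nt/(k+1)\},\,nt/k]$ contains no prime; in particular the sum over $k$ is finite for each $n$. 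The goal is to show (i) $T_{n,k}/(n\log n)\to t/(k(k+1))$ for each fixed $k$, and (ii) a bound $T_{n,k}/(n\log n)\le M_k$ valid for all large $n$ with $\sum_k z^kM_k<\infty$; then dominated convergence for series (Tannery's theorem) yields $\sum_{k\ge1}z^kT_{n,k}/(n\log n)\to\sum_{k\ge1}z^k\,t/(k(k+1))=t\,h(1-x)$, which is the claim.

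For step (i), fix $k\ge1$. For $n$ large the restriction $p>\sqrt n$ is vacuous (since $nt/(k+1)>\sqrt n$ eventually), so $T_{n,k}=\sum_{nt/(k+1)<p\le nt/k}\log^{2}p$. By partial summation applied to $\vartheta(y)=\sum_{p\le y}\log p\simeq y$ (the prime number theorem) one has $\sum_{p\le y}\log^{2}p\simeq y\log y$, hence
\[
T_{n,k}=\frac{nt}{k}\log\frac{nt}{k}-\frac{nt}{k+1}\log\frac{nt}{k+1}+o(n\log n)=\frac{nt\log n}{k(k+1)}\bigl(1+o(1)\bigr),
\]
where we used $\log(nt/k)=\log n+O(1)$ for fixed $k$ to extract the leading term. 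Thus $T_{n,k}/(n\log n)\to t/(k(k+1))$.

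For step (ii), bound $\log^{2}p\le(\log(nt/k))^{2}\le(\log n)^{2}$ for $p\le nt/k$, and combine this with Chebyshev's estimate $\pi(y)\le Cy/\log y$ for $y\ge2$ (with the prime-counting function $\pi$): whenever $T_{n,k}\ne0$ we have $nt/k>\sqrt n$, so $\log(nt/k)>\tfrac12\log n$ and the number of primes counted by $T_{n,k}$ is at most $\pi(nt/k)\le 2C\,\frac{nt}{k\log n}$. Therefore, for all $n\ge 4$ and all $k$,
\[
\frac{T_{n,k}}{n\log n}\;\le\;\frac{(\log n)^{2}}{n\log n}\cdot\frac{2C\,nt}{k\log n}\;=\;\frac{2Ct}{k}\;=:M_k,
\]
and $\sum_{k\ge1}z^{k}M_k=-2Ct\log(1-z)<\infty$ since $z<1$. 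This completes the domination and hence the proof.

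The one genuinely delicate point is step (ii): the crude estimate $\log^{2}p\le\log^{2}n$ together with a trivial count of the integers in $(nt/(k+1),nt/k]$ gives only $T_{n,k}/(n\log n)\lesssim t\log n/(k(k+1))$, a bound that diverges with $n$ and is useless for passing the limit inside the sum; one really needs the $1/\log$ saving coming from the prime-counting estimate to absorb the extra factor of $\log n$. Everything else is routine.
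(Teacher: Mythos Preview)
Your proof is correct and follows essentially the same route as the paper: group by $k=\lfloor nt/p\rfloor$, use $\sum_{p\le y}\log^{2}p\simeq y\log y$ for the termwise limit, and then control the tail so as to interchange limit and sum. The only difference is in the tail bound: the paper simply uses $T_{n,k}\le\sum_{p\le n}\log^{2}p\le Cn\log n$, giving a constant dominating sequence $M_{k}\equiv C$ for which $\sum_{k}z^{k}M_{k}$ already converges geometrically---so, contrary to your closing remark, the $1/\log$ saving from $\pi(y)\ll y/\log y$ is not actually needed.
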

\begin{proof}
Let us first show that
\begin{equation}\label{eq:log^{2}_asymp}
\sum_{p\in\mathcal{P}:p\le x}\log^{2}p\ \simeq\ x\log x.
\end{equation}
as $x\to\infty$. To this end, note that
\begin{align*}
\sum_{p\le x,p\in\mathcal{P}}\log^{2}p\ =\ \int_{[2,x]} \log^{2}z\ {\rm d}\pi(z)\ =\ \pi(z)\log^{2}z\bigg|_{2}^{x}-\int_{2}^x \frac{2\pi(z)\log z}{z}\ {\rm d}z.
\end{align*}
By another appeal to the prime number theorem, $\pi(x)\log^{2}x\simeq x\log x$ and the integrand is bounded, i.e., the integral itself is $O(x)$ as $x\to\infty$. This proves \eqref{eq:log^{2}_asymp} which in turn further provides us with the relation
\begin{equation}\label{eq:log^{2}_asymp2}
\sum_{p\in\mathcal{P}\cap\,(nt/(k+1),nt/k]}\log^{2}p\ \simeq\ \frac{nt\log n}{k(k+1)}\quad (n\to\infty)
\end{equation}
for any $k\in\N$ and $t\in(0,1]$.

Now fix an arbitrary $m\in\N$ and write
\begin{align*}
\sum_{p\in\mathcal{P}\cap (\sqrt{n},nt]}(1-x)^{\lfloor nt/p\rfloor}\log^{2}p\ &=\ \sum_{k=1}^{\lfloor\sqrt{n}t\rfloor}(1-x)^{k}\sum_{p\in\mathcal{P}\cap\,(nt/(k+1),nt/k]}\log^{2}p\\
&=\ \sum_{k=1}^{m}(1-x)^{k}\sum_{p\in\mathcal{P}\cap\,(nt/(k+1),nt/k]}\log^{2}p\\
&\qquad+\sum_{k=m+1}^{\lfloor\sqrt{n}t\rfloor}(1-x)^{k}\sum_{p\in\mathcal{P}\cap\,(nt/(k+1),nt/k]}\log^{2}p\\
&=:\ A_{1}(n,m)+A_{2}(n,m).
\end{align*}
By \eqref{eq:log^{2}_asymp2}, we have
$$ \lim_{n\to\infty}\frac{A_{1}(n,m)}{n\log n}\ =\ t\sum_{k=1}^{m}\frac{(1-x)^{k}}{k(k+1)}, $$
and for $A_{2}(n,m)$, the estimate
\begin{align*}
A_{2}(n,m)\ &\le\ \sum_{k\ge m+1}(1-x)^{k}\sum_{p\in\mathcal{P}:p\le n}\log^{2}p\\
&\le\ Cn\log n\sum_{k\ge m+1}(1-x)^{k}\ =\ Cx^{-1}(1-x)^{m+1}n\log n
\end{align*}
for some $C>0$ follows as a consequence of \eqref{eq:log^{2}_asymp}. Hence,
$$ \limsup_{n\to\infty}\frac{A_{2}(n,m)}{n\log n}\le Cx^{-1}(1-x)^{m+1}. $$
By combining these facts, we obtain
\begin{multline*}
t\sum_{k=1}^{m}\frac{(1-x)^{k}}{k(k+1)}\ \le\ 
\liminf_{n\to\infty}\frac{\sum_{p\in\mathcal{P}\cap (\sqrt{n},nt]}(1-x)^{\lfloor nt/p\rfloor}\log^{2}p}{n\log n}\\
\le\ 
\limsup_{n\to\infty}\frac{\sum_{p\in\mathcal{P}\cap (\sqrt{n},nt]}(1-x)^{\lfloor nt/p\rfloor}\log^{2}p}{n\log n}
\le\ t\sum_{k=1}^{m}\frac{(1-x)^{k}}{k(k+1)}\ +\ Cx^{-1}(1-x)^{m+1}
\end{multline*}
for any fixed $m\in\N$. Sending $m\to\infty$ yields the assertion and completes the proof.
\end{proof}

The next lemma forms an extension of the previous one and an important ingredient in the proof of our main theorem (convergence of covariances).

\begin{lemma}\label{lem:cov_asymp}
For any fixed $x\in (0,1)$ and $0<s\le t\le 1$,
\begin{multline*}
\frac{1}{n\log n}\sum_{p\in\mathcal{P}\cap (\sqrt{n},ns]}(1-x)^{\lfloor nt/p\rfloor}\left(1-(1-x)^{\lfloor ns/p\rfloor}\right)\log^{2}p \\
\simeq\ \sum_{j\ge 1}(1-(1-x)^{j})\hspace{-.5cm}\sum_{\textstyle i\in\left(\frac{tj}{s}-1,\frac{tj}{s}+\frac{t}{s}\right)}\hspace{-.3cm}(1-x)^{i} \left(\frac{t}{i}\wedge \frac{s}{j}-\frac{t}{i+1}\vee \frac{s}{j+1}\right),
\end{multline*}
as $n\to\infty$.

\end{lemma}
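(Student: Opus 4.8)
The plan is to split the sum over $p$ into blocks on which both $\lfloor ns/p\rfloor$ and $\lfloor nt/p\rfloor$ are constant, to evaluate each block with the help of \eqref{eq:log^{2}_asymp}, and then to control the resulting double series by a truncation argument, exactly as in the proof of Lemma \ref{lem:simple_sum_asymp}.

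Fix $j\in\N$ and look at the primes $p$ with $\lfloor ns/p\rfloor=j$, i.e.\ $p\in(ns/(j+1),\,ns/j]$. On this range the quantity $nt/p=(t/s)(ns/p)$ runs through the interval $[jt/s,\,(j+1)t/s)$ of length $t/s$, so $i:=\lfloor nt/p\rfloor$ can only take the integer values in $(jt/s-1,\,jt/s+t/s)$; this is precisely the index set occurring in the asserted limit, and for every such $i$ one checks that $t/i\wedge s/j>t/(i+1)\vee s/(j+1)$, so that all summands of the inner sum are strictly positive. For a fixed admissible pair $(i,j)$ the relevant primes are exactly those in the intersection
$$ (nt/(i+1),\,nt/i]\cap(ns/(j+1),\,ns/j]=(n\beta_{i,j},\,n\gamma_{i,j}], $$
where $\beta_{i,j}:=t/(i+1)\vee s/(j+1)$ and $\gamma_{i,j}:=t/i\wedge s/j$ are positive constants independent of $n$ with $\beta_{i,j}<\gamma_{i,j}$. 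Since \eqref{eq:log^{2}_asymp} gives $\sum_{p\le x}\log^{2}p\simeq x\log x$, summing $\log^{2}p$ over such an intersection is asymptotically $n(\gamma_{i,j}-\beta_{i,j})\log n$ (the correction coming from $\log(n\gamma)-\log(n\beta)$ being only $O(n)=o(n\log n)$). Consequently, for every fixed $m\in\N$, the part of the left-hand side with $\lfloor ns/p\rfloor\le m$, divided by $n\log n$, converges as $n\to\infty$ to the partial sum
$$ \sum_{j=1}^{m}(1-(1-x)^{j})\sum_{i\in(jt/s-1,\,jt/s+t/s)}(1-x)^{i}\left(\frac{t}{i}\wedge\frac{s}{j}-\frac{t}{i+1}\vee\frac{s}{j+1}\right), $$
because for $n$ large (depending on $m$) each block $(ns/(j+1),ns/j]$ with $j\le m$ lies entirely inside $(\sqrt n,ns]$ and splits into the finitely many pieces indexed by the admissible~$i$, on each of which the factors $(1-x)^{i}$ and $1-(1-x)^{j}$ are constant.

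It remains to bound the tail formed by the primes with $j=\lfloor ns/p\rfloor>m$. On the block $p\in(ns/(j+1),ns/j]$ one has $\lfloor nt/p\rfloor\ge jt/s-1$, hence $(1-x)^{\lfloor nt/p\rfloor}\le(1-x)^{jt/s-1}$, while the prime number theorem bound $\sum_{p\le x}\log^{2}p\le Cx\log x$ gives $\sum_{p\in(ns/(j+1),ns/j]}\log^{2}p\le C(ns/j)\log n$. Thus the tail is at most $C'n\log n\sum_{j>m}(1-x)^{jt/s}/j$, so after division by $n\log n$ it does not exceed a quantity $\varepsilon_{m}$ that is independent of $n$ and tends to $0$ as $m\to\infty$ (the series converges because $(1-x)^{t/s}<1$); the very same estimate shows that the double series on the right-hand side of the assertion converges absolutely. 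Letting first $n\to\infty$ and then $m\to\infty$ completes the argument; the prime possibly equal to $\sqrt n$ and similar boundary effects contribute only $O(\log^{2}n)=o(n\log n)$ and are negligible.

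I expect the only real obstacle to be the bookkeeping: making the ``finitely many $i$ for each $j$'' statement precise and correctly identifying the index set $(jt/s-1,\,jt/s+t/s)$, carrying the open/closed endpoint conventions through the floor functions. Beyond \eqref{eq:log^{2}_asymp} and the elementary geometric tail estimate, no new ingredient is needed.
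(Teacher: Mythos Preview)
Your proposal is correct and follows essentially the same route as the paper's proof: decompose the sum according to the joint level sets $\{\lfloor ns/p\rfloor=j,\ \lfloor nt/p\rfloor=i\}$, identify the admissible $i$-range $(tj/s-1,\,tj/s+t/s)$ for each $j$, apply \eqref{eq:log^{2}_asymp} to each resulting interval, and finish with a truncation in $j$ at level $m$ exactly as in Lemma~\ref{lem:simple_sum_asymp}. Your write-up is in fact somewhat more explicit than the paper's, which simply cites the argument of Lemma~\ref{lem:simple_sum_asymp} for the truncation step; in particular your geometric tail bound $\sum_{j>m}(1-x)^{jt/s}/j$ makes transparent why the double series on the right-hand side converges.
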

\begin{proof}
We start by noting the equivalence
$$
\lfloor nt/p\rfloor=i\quad\text{and}\quad \lfloor ns/p\rfloor=j\quad\Longleftrightarrow\quad p\in \left(\frac{nt}{i+1}\vee \frac{ns}{j+1},\frac{nt}{i}\wedge \frac{ns}{j}\right],
$$
where the interval on the right can be empty. Fix $j\in\N$ and let us find all integers $i$ such that
\begin{equation}\label{eq:lem_ij_range}
\frac{t}{i+1}\vee \frac{s}{j+1}<\frac{t}{i}\wedge \frac{s}{j}.
\end{equation}

If $t/(i+1)<s/(j+1)$ or, equivalently, $i>t(j+1)/s-1$, then \eqref{eq:lem_ij_range} holds iff $i<t(j+1)/s$. If $t/(i+1)\ge s/(j+1)$ or, equivalently, $i\le t(j+1)/s-1$, then \eqref{eq:lem_ij_range} holds iff $i>tj/s-1$. Therefore, for any fixed $j\in\N$, \eqref{eq:lem_ij_range} holds iff
$$ \frac{tj}{s}-1<i<\frac{t(j+1)}{s}, $$
and this implies
\begin{align*}
&\sum_{p\in\mathcal{P}\cap (\sqrt{n},ns]}\log^{2}p (1-x)^{\lfloor nt/p\rfloor}\left(1-(1-x)^{\lfloor ns/p\rfloor}\right)\\
&\quad=\sum_{j=1}^{\lfloor\sqrt{n}s\rfloor}(1-(1-x)^{j})\sum_{i\in \left(\frac{tj}{s}-1,\frac{tj}{s}+\frac{t}{s}\right)}(1-x)^{i}\sum_{p\in\mathcal{P}\cap\left(\frac{nt}{i+1}\vee \frac{ns}{j+1},\frac{nt}{i}\wedge\frac{ns}{j}\right]}\log^{2}p.
\end{align*}
It remains to note that, for any fixed $j\in\N$ and any integer $i\in\left(\frac{tj}{s}-1,\frac{tj}{s}+\frac{t}{s}\right)$,
$$
\sum_{p\in\mathcal{P}\cap \left(\frac{nt}{i+1}\vee \frac{ns}{j+1},\frac{nt}{i}\wedge \frac{ns}{j}\right]}\log^{2}p\ \simeq\ \left(\frac{t}{i}\wedge \frac{s}{j}-\frac{t}{i+1}\vee\frac{s}{j+1}\right)n\log n,
$$
as $n\to\infty$. Now arguing in the same way as in the last part of the proof of Lemma \ref{lem:simple_sum_asymp} and noting that the series
$$
\sum_{j\ge 1}(1-(1-x)^{j})\sum_{i\in \left(\frac{tj}{s}-1,\frac{tj}{s}+\frac{t}{s}\right)}(1-x)^{i} \left(\frac{t}{i}\wedge \frac{s}{j}-\frac{t}{i+1}\vee \frac{s}{j+1}\right)
$$
converges, we obtain the assertion of the lemma.
\end{proof}

\begin{lemma}\label{lem:cheb_sum_with_remainder}
For any $x\in(0,1)$, there exists $C=C(x)>0$ such that
\begin{equation}\label{lem:sum_w_remain}
\left|\sum_{p\in\mathcal{P}:p\le t}(1-x)^{\lfloor t/p\rfloor}\log p\,-\,th(1-x)\right|\ \le\ \frac{Ct}{\log (t+2)},
\end{equation}
for all $t\ge 0$, where $h$ is as defined in Lemma \ref{lem:simple_sum_asymp}.
\end{lemma}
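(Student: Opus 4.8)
The plan is to rewrite the sum in terms of the first Chebyshev function $\vartheta$ and then feed in the prime number theorem with remainder term. Grouping the primes $p\le t$ according to the value $k=\lfloor t/p\rfloor$ (so that $p\in(t/(k+1),t/k]$ for the corresponding $k\ge1$), one obtains
$$\sum_{p\in\mathcal{P}:\,p\le t}(1-x)^{\lfloor t/p\rfloor}\log p\ =\ \sum_{k\ge1}(1-x)^{k}\big(\vartheta(t/k)-\vartheta(t/(k+1))\big),$$
a finite sum, since $\vartheta(t/k)=0$ once $k>t/2$. Writing $\vartheta(y)=y+r(y)$ with $r(y):=\vartheta(y)-y$ and using $\tfrac1{k(k+1)}=\tfrac1k-\tfrac1{k+1}$, the main part $t\sum_{k\ge1}\frac{(1-x)^{k}}{k(k+1)}=t\,h(1-x)$ separates off, leaving the remainder
$$\mathrm{Err}(t)\ :=\ \sum_{k\ge1}(1-x)^{k}\big(r(t/k)-r(t/(k+1))\big)\ =\ r(t)\,-\,x\sum_{k\ge1}(1-x)^{k-1}r(t/k),$$
the second equality being Abel summation (reindex the subtracted series by $j=k+1$); all series here converge absolutely thanks to the trivial bound $|r(y)|\le C_0\,y/\log2$ implied by the estimate quoted next.

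The required input from analytic number theory is the prime number theorem in the form: there is an absolute constant $C_0$ with $|r(y)|=|\vartheta(y)-y|\le C_0\,y/\log(y+2)$ for every $y\ge0$ (for $y\ge2$ this is the classical bound $\vartheta(y)=y+O(y/\log y)$; for $0\le y<2$ it is trivial because $\vartheta(y)=0$ and $\log(y+2)\le\log4$). I would then estimate $|\mathrm{Err}(t)|\le|r(t)|+x\sum_{k\ge1}(1-x)^{k-1}|r(t/k)|$ and split the last sum at $N:=\lfloor(t+2)^{1/2}\rfloor$, which is at least $1$. For $1\le k\le N$ one checks $t/k+2\ge(t+2)^{1/2}$, whence $\log(t/k+2)\ge\tfrac12\log(t+2)$ and $|r(t/k)|\le 2C_0\,t/\big(k\log(t+2)\big)$; summing the resulting series bounds this range by a constant (depending only on $x$) times $t/\log(t+2)$. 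For $k>N$ I would use instead $|r(t/k)|\le C_0\,t/(k\log2)$, so that
$$x\sum_{k>N}(1-x)^{k-1}|r(t/k)|\ \le\ \frac{C_0\,t}{\log2}\cdot\frac1{N+1}\ \le\ \frac{C_0\,t}{\log2}\cdot\frac1{(t+2)^{1/2}}\ \le\ \frac{C_0\,t}{\log2\cdot\log(t+2)},$$
the last inequality using $(t+2)^{1/2}\ge\log(t+2)$, valid for all $t\ge0$. Adding $|r(t)|\le C_0\,t/\log(t+2)$ to the two contributions yields \eqref{lem:sum_w_remain} with an explicit $C=C(x)$.

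The one point needing care is uniformity down to small $t$, where $\log(t+2)$ is of order one and $\vartheta$ vanishes: this is exactly why the remainder must be normalised by $\log(y+2)$ rather than $\log y$, and why in the tail $k>N$ one keeps the factor $t/k$ instead of bounding it by $(t+2)^{1/2}$, which would ruin the estimate at $t=0$. The rest is a routine marriage of Abel summation with the prime number theorem, and I do not expect further difficulties.
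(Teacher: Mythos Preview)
Your argument is correct and follows the same decomposition as the paper: group the primes according to $k=\lfloor t/p\rfloor$, rewrite as $\sum_{k}(1-x)^{k}\bigl(\vartheta(t/k)-\vartheta(t/(k+1))\bigr)$, and feed in the prime number theorem with remainder $|\vartheta(z)-z|\le C_{1}z/\log(z+2)$. The only difference is in the error bookkeeping: where you Abel-sum and then split at $N=\lfloor(t+2)^{1/2}\rfloor$, the paper observes directly that $z\mapsto z/\log(z+2)$ is nondecreasing on $[0,\infty)$, so $|\vartheta(t/i)-t/i|\le C_{1}t/\log(t+2)$ uniformly in $i\ge 1$, and then simply sums the geometric factor $\sum_{i}(1-x)^{i}\le(1-x)/x$; this avoids both the Abel step and the split, at the small cost of handling the tail $\sum_{i>\lfloor t\rfloor}(1-x)^{i}/\bigl(i(i+1)\bigr)$ of $h(1-x)$ separately.
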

\begin{proof}
We may assume without loss of generality that $t\ge t_{0}$ for a fixed $t_{0}$. Recalling that $\vartheta$ denotes the first Chebyshev function defined by \eqref{eq:chebyshev_func1_def}, we can write
\begin{align*}
\sum_{p\in\mathcal{P}:p\le t}\log p\, (1-x)^{\lfloor t/p\rfloor}\ &=\ \sum_{i=1}^{\lfloor t\rfloor}(1-x)^{i} \sum_{p\in\mathcal{P}\cap (t/(i+1),t/i]}\log p\\
&=\sum_{i=1}^{\lfloor t\rfloor}(1-x)^{i} \left(\vartheta\left(\frac{t}{i}\right)-\vartheta\left(\frac{t}{i+1}\right)\right).
\end{align*}
Using the well-known inequality
$$ |\vartheta(z)-z|\ \le\ \frac{C_{1}z}{\log (z+2)}, $$
valid for all $z\ge 0$ and some $C_{1}>0$, we obtain
\begin{align*}
&\left|\sum_{p\in\mathcal{P}:p\le t}(1-x)^{\lfloor t/p\rfloor}\log p\,-\,t\sum_{i=1}^{\lfloor t\rfloor}\frac{(1-x)^{i}}{i(i+1)}\right|\\
&\hspace{2cm}\le\ \frac{2C_{1}t}{\log (t+2)}\sum_{i=1}^{\lfloor t\rfloor}(1-x)^{i}\ \le\ \frac{2(1-x)C_{1}t}{x\log (t+2)}. \end{align*}
Finally, since $h(1-x)-\sum_{i=1}^{\lfloor t\rfloor}\frac{(1-x)^{i}}{i(i+1)}=\sum_{i\ge\lfloor t\rfloor+1}\frac{(1-x)^{i}}{i(i+1)}$ satisfies the inequality
$$ t\sum_{i\ge\lfloor t\rfloor+1}\frac{(1-x)^{i}}{i(i+1)}\ \le\ \sum_{i\ge\lfloor t\rfloor+1}\frac{1}{i(i+1)}\ \le\ \frac{1}{\lfloor t\rfloor+1},
$$
we infer \eqref{lem:sum_w_remain} with $C=C(x)=\frac{2(1-x)C_{1}}{x}+1$ for all sufficiently large $t$.
\end{proof}

\bibliographystyle{amsplain}
\bibliography{StoPro}

\def\cprime{$'$}
\providecommand{\bysame}{\leavevmode\hbox to3em{\hrulefill}\thinspace}
\providecommand{\MR}{\relax\ifhmode\unskip\space\fi MR }
\providecommand{\MRhref}[2]{%
  \href{http://www.ams.org/mathscinet-getitem?mr=#1}{#2}
}
\providecommand{\href}[2]{#2}
\begin{thebibliography}{1}

\bibitem{Apostol:76}
Tom~M. Apostol, \emph{Introduction to analytic number theory}, Springer-Verlag,
  New York-Heidelberg, 1976, Undergraduate Texts in Mathematics. \MR{0434929}

\bibitem{ArrBarTav:03}
Richard Arratia, A.~D. Barbour, and Simon Tavar{\'e}, \emph{Logarithmic
  combinatorial structures: a probabilistic approach}, EMS Monographs in
  Mathematics, European Mathematical Society (EMS), Z\"urich, 2003. \MR{2032426
  (2004m:60004)}

\bibitem{Billingsley:99}
Patrick Billingsley, \emph{Convergence of probability measures}, $2^{\it nd}$
  ed., Wiley Series in Probability and Statistics: Probability and Statistics,
  John Wiley \& Sons Inc., New York, 1999, A Wiley-Interscience Publication.
  \MR{1700749 (2000e:60008)}

\bibitem{CillRueSarka:14}
Javier Cilleruelo, Juanjo Ru\'e, Paulius {\v S}arka, and Ana Zumalac\'arregui,
  \emph{The least common multiple of random sets of positive integers}, J.
  Number Theory \textbf{144} (2014), 92--104. \MR{3239153}

\bibitem{DiaconisErdos:04}
Persi Diaconis and Paul Erd\"os, \emph{On the distribution of the greatest
  common divisor}, A festschrift for {H}erman {R}ubin, IMS Lecture Notes
  Monogr. Ser., vol.~45, Inst. Math. Statist., Beachwood, OH, 2004, pp.~56--61.
  \MR{2126886}

\bibitem{Kosorok:08}
Michael~R. Kosorok, \emph{Introduction to empirical processes and
  semiparametric inference}, Springer Series in Statistics, Springer, New York,
  2008. \MR{2724368}

\bibitem{Pollard:90}
David Pollard, \emph{Empirical processes: theory and applications}, NSF-CBMS
  Regional Conference Series in Probability and Statistics, vol.~2, Institute
  of Mathematical Statistics, Hayward, CA; American Statistical Association,
  Alexandria, VA, 1990. \MR{1089429}

\bibitem{Schoenfeld:76}
Lowell Schoenfeld, \emph{Sharper bounds for the {C}hebyshev functions {$\theta
  (x)$} and {$\psi (x)$}. {II}}, Math. Comp. \textbf{30} (1976), no.~134,
  337--360, Corrigendum in 136, 900. \MR{0457374}

\end{thebibliography}

\end{document}